\tikzstyle{dotpic}=[scale=0.5]
\tikzstyle{braceedge}=[decorate,decoration={brace,amplitude=1mm,raise=-1mm}]
\tikzstyle{left hook arrow}=[left hook-latex]
\tikzstyle{right hook arrow}=[right hook-latex]
\tikzstyle{dot}=[inner sep=0.7mm,minimum width=0pt,minimum height=0pt,fill=black,draw=black,shape=circle]
\tikzstyle{black dot}=[dot]
\tikzstyle{white dot}=[dot,fill=white]
\tikzstyle{gray dot}=[dot,fill=gray!40!white]
\tikzstyle{alt white dot}=[white dot,label={[xshift=3mm,yshift=-0.05mm,font=\tiny]left:$*$}]
\tikzstyle{alt gray dot}=[gray dot,label={[xshift=3mm,yshift=-0.05mm,font=\tiny]left:$*$}]
\tikzstyle{white norm}=[rectangle,fill=white,draw=black,minimum height=2mm,minimum width=2mm,inner sep=0pt,font=\small]
\tikzstyle{gray norm}=[white norm,fill=gray!40!white]
\tikzstyle{black norm}=[white norm,fill=black]
\tikzstyle{arrs}=[-latex,font=\small,auto]
\tikzstyle{arrow plain}=[arrs]
\tikzstyle{arrow dashed}=[dashed,arrs]
\tikzstyle{arrow bold}=[very thick,arrs]
\tikzstyle{arrow hide}=[draw=white!0,-]
\tikzstyle{arrow reverse}=[latex-]
\tikzstyle{cdnode}=[]
\tikzstyle{wide point}=[fill=white,draw=black,shape=isosceles triangle,shape border rotate=90,isosceles triangle stretches=true,inner sep=1pt,minimum width=1.5cm,minimum height=5mm]
\tikzstyle{wide copoint}=[fill=white,draw=black,shape=isosceles triangle,shape border rotate=-90,isosceles triangle stretches=true,inner sep=1pt,minimum width=1.5cm,minimum height=4mm]
\tikzstyle{very wide copoint}=[fill=white,draw=black,shape=isosceles triangle,Shape border rotate=-90,isosceles triangle stretches=true,inner sep=1pt,minimum width=2.5cm,minimum height=4mm]
\tikzstyle{very wide empty copoint}=[draw=black,shape=isosceles triangle,shape border rotate=-90,isosceles triangle stretches=true,inner sep=1pt,minimum width=2.5cm,minimum height=4mm]
\tikzstyle{symm}=[ultra thick,shorten <=-1mm,shorten >=-1mm]
\tikzstyle{square box}=[rectangle,fill=white,draw=black,minimum height=5mm,minimum width=5mm,font=\small]
\tikzstyle{square gray box}=[rectangle,fill=gray!30,draw=black,minimum height=6mm,minimum width=6mm]
\tikzstyle{point}=[regular polygon,regular polygon rotate=180,regular polygon sides=3,draw=black,scale=0.75,inner sep=-0.5pt,minimum width=.6cm,fill=white]
\tikzstyle{copoint}=[regular polygon,regular polygon sides=3,draw=black,scale=0.75,inner sep=-0.5pt,minimum width=1cm,fill=white]
\tikzstyle{gray point}=[point,fill=gray!40!white]
\tikzstyle{gray copoint}=[copoint,fill=gray!40!white]
\tikzstyle{diredge}=[->]
\tikzstyle{rdiredge}=[<-]
\tikzstyle{dashed edge}=[dashed]
\tikzstyle{cross}=[preaction={draw=white, -, line width=3pt}]
\tikzstyle{black point}=[regular polygon,regular polygon rotate=180,regular polygon sides=3,draw=black,scale=0.25,inner sep=-0.5pt,minimum width=1cm,fill=black]
\tikzstyle{grey point}=[regular polygon,regular polygon rotate=180,regular polygon sides=3,draw=black,scale=0.25,inner sep=-0.5pt,minimum width=1cm,fill=gray!40!white]
\tikzstyle{white point}=[regular polygon,regular polygon rotate=180,regular polygon sides=3,draw=black,scale=0.25,inner sep=-0.5pt,minimum width=1cm,fill=white]
\tikzstyle{wide white point}=[regular polygon,regular polygon rotate=180,regular polygon sides=3,draw=black,xscale=0.25,yscale=0.15,inner sep=-0.5pt,minimum width=15mm,fill=white]
\tikzstyle{wide white copoint}=[regular polygon,regular polygon rotate=0,regular polygon sides=3,draw=black,xscale=0.25,yscale=0.15,inner sep=-0.5pt,minimum width=15mm,fill=white]
\tikzstyle{black copoint}=[regular polygon,regular polygon rotate=0,regular polygon sides=3,draw=black,scale=0.25,inner sep=-0.5pt,minimum width=1cm,fill=black]
\tikzstyle{grey copoint}=[regular polygon,regular polygon rotate=0,regular polygon sides=3,draw=black,scale=0.25,inner sep=-0.5pt,minimum width=1cm,fill=gray!40!white]
\tikzstyle{white copoint}=[regular polygon,regular polygon rotate=0,regular polygon sides=3,draw=black,scale=0.25,inner sep=-0.5pt,minimum width=1cm,fill=white]
\newcommand{\subsystemcolour}{gray!40!white}
\tikzstyle{subsystem}=[postaction={decorate,decoration={markings,
\tikzstyle{None}=[circle, fill=white, inner sep=0pt]
\newcommand{\sem}[1]{\ensuremath{\llbracket #1 \rrbracket}}
\newcommand{\cat}[1]{\ensuremath{\mathbf{#1}}\xspace}
\newcommand{\FHilb}{\cat{FHilb}}
\newcommand{\CP}{\ensuremath{\cat{CP}\xspace}}
\newcommand{\CPs}{\ensuremath{\CP^*}\xspace}
\newcommand{\defined}{\ensuremath{\mathop{\downarrow}}}
\theoremstyle{definition}
\newtheorem{theorem}{Theorem}[section]
\newtheorem{corollary}[theorem]{Corollary}
\newtheorem{lemma}[theorem]{Lemma}
\newtheorem{proposition}[theorem]{Proposition}
\theoremstyle{definition}
\newtheorem{definition}[theorem]{Definition}
\newtheorem{example}[theorem]{Example}
\tikzstyle{dot}=[circle, draw=black, fill=black!25, inner sep=.4ex]
\tikzstyle{black_dot}=[dot, fill=black!50]
\tikzstyle{white_dot}=[dot, fill=white]
\newif\ifvflip\pgfkeys{/tikz/vflip/.is if=vflip}
\newif\ifhflip\pgfkeys{/tikz/hflip/.is if=hflip}
\newif\ifhvflip\pgfkeys{/tikz/hvflip/.is if=hvflip}
\newlength\morphismheight
\newlength\wedgewidth
\tikzset{width/.initial=1mm}
\tikzstyle{morphism}=[font=\small,morphismshape]
\newcommand{\tinymult}[1][dot]{
\smash{\raisebox{-2pt}{\hspace{-5pt}\ensuremath{\begin{pic}[scale=0.4,yscale=-1]
    \node (0) at (0,0) {};
    \node[#1, inner sep=1.5pt] (1) at (0,0.55) {};
    \node (2) at (-0.5,1) {};
    \node (3) at (0.5,1) {};
    \draw (0.center) to (1.center);
    \draw (1.center) to [out=left, in=down, out looseness=1.5] (2.center);
    \draw (1.center) to [out=right, in=down, out looseness=1.5] (3.center);
    \node[#1, inner sep=1.5pt] (1) at (0,0.55) {};
\end{pic}
}\hspace{-3pt}}}}
\newcommand{\tinymultflip}[1][dot]{
\smash{\raisebox{-2pt}{\hspace{-5pt}\ensuremath{\begin{pic}[scale=0.4,yscale=1]
    \node (0) at (0,0) {};
    \node[#1, inner sep=1.5pt] (1) at (0,0.55) {};
    \node (2) at (-0.5,1) {};
    \node (3) at (0.5,1) {};
    \draw (0.center) to (1.center);
    \draw (1.center) to [out=left, in=down, out looseness=1.5] (2.center);
    \draw (1.center) to [out=right, in=down, out looseness=1.5] (3.center);
    \node[#1, inner sep=1.5pt] (1) at (0,0.55) {};
\end{pic}
}\hspace{-3pt}}}}
\newcommand{\tinyunit}[1][dot]{
\smash{\raisebox{1pt}{\hspace{-3pt}\ensuremath{\begin{pic}[scale=0.4,yscale=-1]
    \node (0) at (0,0) {};
    \node[#1, inner sep=1.5pt] (1) at (0,0.55) {};
    \draw (0.center) to (1.north);
\end{pic}
}\hspace{-1pt}}}}
\newcommand\relto[1][]{\mathbin{\smash{
\begin{tikzpicture}[baseline={([yshift=-1pt]
current bounding box.south)}]
    \node (A) at (0,0) [inner xsep=0pt, inner ysep=1pt, minimum width=0.15cm] {\ensuremath{\scriptstyle #1}};
    \draw [->, line width=0.4pt, line cap=round]
        ([xshift=-2.5pt] A.south west)
        to ([xshift=3pt] A.south east);
    \draw [line width=.4pt] ([xshift=-1pt,yshift=-2pt]A.south) to ([xshift=1pt,yshift=2pt]A.south);
\end{tikzpicture}}}}
\newcommand\sxto[1]{\mathbin{\smash{
\begin{tikzpicture}[baseline={([yshift=-1pt]
current bounding box.south)}]
    \node (A) at (0,0) [inner xsep=0pt, inner ysep=1pt, minimum width=0.15cm] {\ensuremath{\scriptstyle #1}};
    \draw [->, line width=0.4pt, line cap=round]
        ([xshift=-2.5pt] A.south west)
        to ([xshift=3pt] A.south east);
\end{tikzpicture}}}}
\newenvironment{pic}[1][]
{\begin{aligned}\begin{tikzpicture}[font=\tiny,#1]}
{\end{tikzpicture}\end{aligned}}
\newcommand{\id}[1]{\ensuremath{\mathrm{id}_{#1}}}
\newcommand{\catC}{\cat{C}}
\newcommand{\Rel}{\ensuremath{\cat{Rel}}}
\newcommand{\RelC}{\ensuremath{\Rel(\cat{C})}}
\title{Categories of relations as models of quantum theory}
\author{Chris Heunen\thanks{Supported by EPSRC Fellowship EP/L002388/1.} \hspace*{1ex}and Sean Tull\thanks{Supported by EPSRC Studentship OUCL/2014/SET. We thank Bob Coecke and Aleks Kissinger for helpful discussions.}
\email{\{heunen,sean.tull\}@cs.ox.ac.uk}
\institute{University of Oxford, Department of Computer Science}}
\begin{document}
\maketitle

\begin{abstract} 
  Categories of relations over a regular category form a family of models of quantum theory.
  Using regular logic, many properties of relations over sets lift to these models, including the correspondence between Frobenius structures and internal groupoids. 
  Over compact Hausdorff spaces, this lifting gives continuous symmetric encryption.
  Over a regular Mal'cev category, this correspondence gives a characterization of categories of completely positive maps, enabling the formulation of quantum features.
  These models are closer to Hilbert spaces than relations over sets in several respects:  Heisenberg uncertainty, impossibility of broadcasting, and behavedness of rank one morphisms.
\end{abstract}

\section{Introduction}

Many features of quantum theory can be abstracted to arbitrary \emph{compact dagger categories}~\cite{abramskycoecke:categoricalsemantics}. Thus we can compare models of quantum theory in a unified setting, and look for features that distinguish the category $\FHilb$ of finite-dimensional Hilbert spaces that forms the traditional model.
The category $\cat{Rel}$ of sets and \emph{relations} is an alternative model. It exhibits many features considered typical of quantum theory~\cite{baez:quandaries}, but also refutes presumed correspondences between them~\cite{Coecke2013,heunenkissinger:cbh}. 
However, apart from $\Rel$ and its subcategory corresponding to Spekkens' toy model~\cite{coeckeedwards:spek}, few alternative models have been studied in detail.

We consider a new family of models by generalizing to categories $\RelC$ of relations over an arbitrary \emph{regular category} $\catC$, including 
any algebraic category like that of groups, any abelian category like that of vector spaces, and any topos like that of sets. 
Despite this generality, \emph{internal logic} allows us to state and prove results as if working in $\Rel$, as reviewed in Section~\ref{sec:regular}. 
Just as $\Rel$ is of independent interest, also
$\RelC$ is not just a toy model: we shortly discuss \emph{continuous symmetric encryption} by letting $\catC$ consist of compact Hausdorff spaces. For another example see~\cite{bonchietal:signalflow}.

Section~\ref{sec:properties} shows that $\RelC$ often has the unusual feature of lacking discernible measurement outcomes, but when $\catC$ is a regular \emph{Mal'cev category} it behaves more like $\FHilb$ than $\Rel$ in three ways: the \emph{Heisenberg uncertainty} principle, 
the ability to \emph{broadcast}, and behavedness of \emph{rank one} morphisms. 
Thus this family of models is genuinely different, and leads to the following scale.
\begin{center}\begin{tabular}{c@{\qquad\qquad}c@{\qquad\qquad}c}
  ``least quantum'' & $\leftrightarrow$& ``most quantum'' \\
  $\cat{Rel}$ & $\cat{Rel}(\cat{C})$ & $\cat{FHilb}$
\end{tabular}\end{center}
These properties are stated using the CP construction~\cite{Coecke2013i,heunenkissingervicary:cp}, that makes the Frobenius structures in a category into the objects of a new one. The objects of $\CP(\Rel)$ are \emph{groupoids}~\cite{Heunen2013}, and Section~\ref{sec:cp} proves that this holds for $\RelC$ too, for any regular $\catC$.
Our main result, discussed in Section~\ref{sec:malcev}, is that
\begin{equation}\label{eq:maintheorem}
  \CP(\RelC) \simeq \Rel(\cat{Cat}(\cat{C}))
\end{equation}
for regular Mal'cev categories $\cat{C}$, whose internal categories $\cat{Cat}(\cat{C})$ and groupoids are understood.
Thus we link the CP passage, from state spaces to algebras of observables, to \emph{categorification}.

\paragraph{Notation}

We briefly recall the \emph{graphical calculus} for monoidal dagger categories; for more detail, see~\cite{selinger:graphicallanguages}. Morphisms $f$ are drawn as
$\setlength\morphismheight{3mm}\begin{pic}
  \node[morphism,font=\tiny] (f) at (0,0) {$f$};
  \draw (f.south) to +(0,-.1);
  \draw (f.north) to +(0,.1);
\end{pic}$; composition, tensor product, and dagger, as:
\[
  \begin{pic}
    \node[morphism] (f) {$g \circ f$};
    \draw (f.south) to +(0,-.55) node[below] {$A$};
    \draw (f.north) to +(0,.55) node[above] {$C$};
  \end{pic}
  = 
  \begin{pic}
    \node[morphism] (g) at (0,.75) {$g\vphantom{f}$};
    \node[morphism] (f) at (0,0) {$f$};
    \draw (f.south) to +(0,-.2) node[below] {$A$};
    \draw (g.south) to node[right] {$B$} (f.north);
    \draw (g.north) to +(0,.2) node[above] {$C$};
  \end{pic}
  \qquad\qquad
  \begin{pic}
    \node[morphism] (f) {$f \otimes g$};
    \draw (f.south) to +(0,-.55) node[below] {$A \otimes C$};
    \draw (f.north) to +(0,.55) node[above] {$B \otimes D$};
  \end{pic}
  = 
  \begin{pic}
    \node[morphism] (f) at (-.4,0) {$f$};
    \node[morphism] (g) at (.4,0) {$g\vphantom{f}$};
    \draw (f.south) to +(0,-.55) node[below] {$A$};
    \draw (f.north) to +(0,.55) node[above] {$B$};
    \draw (g.south) to +(0,-.55) node[below] {$C$};
    \draw (g.north) to +(0,.55) node[above] {$D$};
  \end{pic}
  \qquad\qquad
  \begin{pic}
    \node[morphism] (f) {$f^\dag$};
    \draw (f.south) to +(0,-.55) node[below] {$B$};
    \draw (f.north) to +(0,.55) node[above] {$A$};
  \end{pic}
  =
  \begin{pic}
    \node[morphism,hflip] (f) {$f$};
    \draw (f.south) to +(0,-.55) node[below] {$B$};
    \draw (f.north) to +(0,.55) node[above] {$A$};
  \end{pic}
\]
Distinguished morphisms are depicted with special diagrams: the identity $A \to A$ is just the line, $\begin{aligned}\begin{pic}
  \draw (0,-.15) to (0,.15);
\end{pic}\end{aligned}$\;; the (identity on) the monoidal unit object $I$ is the empty picture, and the swap map of symmetric monoidal categories becomes $\begin{aligned}\begin{pic}[scale=.25]
  \draw (0,-.5) to[out=80,in=-100] (1,.5);
  \draw (1,-.5) to[out=100,in=-80] (0,.5);
\end{pic}\end{aligned}$.
In particular, we will draw $\tinymult \colon A \otimes A \to A$ and $\tinyunit \colon I \to A$ for the multiplication and unit of \emph{Frobenius structures}, made precise in Definition~\ref{def:frobeniusstructure} below.

\paragraph{Continuous symmetric encryption}

We now illustrate the utility of $\RelC$ in its own right, not as a toy model. The case $\cat{C}=\cat{Set}$ can model symmetric encryption~\cite{Stay2013}, and the same techniques apply to any regular $\cat{C}$.
A \emph{symmetric encryption protocol} in $\RelC$ is specified by an \emph{encryption} morphism $E \colon M \times K \to E$ 
relating \emph{plaintext} $P$ and \emph{key} $K$ to \emph{ciphertext} $C$, satisfying:
\[ 
  \begin{pic}
    \node[morphism](e) at (0,1.5){$E$};
    \node[morphism, hflip](d) at (1.1,2.5){$E$};
    \node[dot](c) at (1,0.5){};
	\draw([xshift = 1mm]d.north east) to[out=90, in=180] +(0.25,+0.25) to[out = 0, in = 90] +(0.25,-0.25) to +(0,-1.5) node[below](g){};
	\draw (g.north) to (g.south);
	\draw (c.east) to[out = 0, in = -90] (g.south);
	\draw([xshift=1mm]e.south east) to +(0,-0.3) node[below](h2){};
	\draw (h2.north) to [in = 180, out = -90](c.west);
	\draw (1,0.1) node[below]{$K$} to (c.south);
	\draw([xshift=-1mm]e.south west) to +(0,-1.2) node[below]{$P$};
	\draw ([xshift=-1mm]d.north west) to +(0,0.4) node[above]{$P$};
  	\draw (e.north) to[out = 90, in = -90] node[left=3mm]{$C$} (d.south);
	\draw[dashed,gray] (-3, 1) to (2,1);
	\draw[dashed,gray] (-3, 2.2) to (2,2.2);
	\node[gray,anchor=west] at (-3, 0.5){\small key generation};
	\node[gray,anchor=west] at (-3, 1.6){\small Alice encrypts};
	\node[gray,anchor=west] at (-3, 2.75){\small Bob decrypts};
  \end{pic}
  \quad = \quad
  \begin{pic}
	\draw(0,0) node[below]{$P$} to +(0,3) node[above]{$P$};
	\draw(0.5,0) node[below]{$K$} to +(0,.4) node[dot]{};
  \end{pic}
\]
The Frobenius structure here is the canonical copying and deleting that is available in $\RelC$~\cite{carboniwalters:cartesianbicategories}.
Equivalently, $(\forall p \colon P,\, k \colon K)(\exists c \colon C) E(p,k,c)$ and $E(p,k,c) \wedge E(p',k,c) \Rightarrow p=p'$.  The protocol is \emph{secure} when no information about plaintext can be deduced from ciphertext without the key:
\[
  \begin{pic}
    \node[morphism](e) at (0,0) {$E$};
    \draw([xshift = -1mm]e.south west) to +(0,-0.5) node[below]{$P$};
	\draw([xshift = 1mm]e.south east) to +(0,-0.25) node[dot]{};
	\draw(e.north) to +(0,0.3) node[above]{$C$};
  \end{pic}
  \quad = \quad
  \begin{pic}
	\draw (0,0) node[below]{$P$} to (0,0.5);
	\draw (0,1) to (0,1.3) node[above]{$C$};
	\node[dot](b) at (0,0.4){};
	\node[dot](t) at (0,.9){};
  \end{pic}
\]

For example, \emph{one-time pad encryption} is the following secure encryption protocol in $\Rel$: take $P$, $K$ and $C$ to be the set $\{f \colon [1,\ldots,n] \to G\}$ of messages in a given group $G$ of length $n$, and take $E$ to be the function $E(p,k)(t) = p(t)k(t)$~\cite{Stay2013}.
A \emph{continuous} version can be described in the category $\Rel(\cat{KHaus})$ of relations over compact Hausdorff spaces. An analogue signal is a continuous function from an interval $[0,T] \subseteq \mathbb{R}$ to the unit circle $S^1 \subseteq \mathbb{C}$.  Take $P$, $K$ and $C$ to be the space of such signals (under the product topology) and set $E(p,k)(t) = p(t)k(t)$.  
The latter protocol is useful when encrypting \textit{e.g.}\ (continuous) speech rather than (discrete) strings of text, and was proven secure by Shannon himself~\cite{Shannon1943}. \footnote{Caveat: the latter protocol requires arbitrary key signals. Practically, Alice could sample her signal at some points, and then transmit using (discrete) one-time pad encryption; Bob can then reconstruct an approximation of the signal. Shannon describes how to generate noise that, when sampled at a given frequency, provides a secure key.
Thus the former protocol approximates the latter to any resolution.}

\paragraph{Future work}

This work is in its early stages, and opens many directions of investigation. 
\begin{itemize}
\item The \emph{proof technique} of internal (regular) logic is very useful. We intend to develop a \emph{graphical} version, where wires in string diagrams can be annotated with `elements', and investigate its expressiveness~\cite{selinger:complete}.

\item The correspondence~\eqref{eq:maintheorem} between groupoids and operator algebra remains to be understood, perhaps through so-called \emph{groupoidification}~\cite{morton:twovectorspaces}. If $\cat{C}$ is \emph{exact} Mal'cev, so is $\cat{Cat}(\cat{C})$~\cite[3.2]{Gran1999}. Hence $\CP(\Rel(\cat{Cat}^n(\cat{C}))) \simeq \Rel(\cat{Cat}^{n+1}(\cat{C}))$, leading to \emph{higher categories}. 

\item Internal categories in the category of groups are \emph{crossed modules}~\cite{brown:groupoids}. Tools from categorical quantum mechanics might shed light on crossed modules, or vice versa. In general, standard notions from categorical quantum mechanics should be investigated in $\RelC$, such as (strong) \emph{complementarity} of Frobenius structures~\cite{coeckeduncan:zx}. 

\item Relation-like categories have been axiomatized: as \emph{allegories} by Freyd and Scedrov \cite{freyd1990categories}, and as \emph{bicategories of relations} by Carboni and Walters \cite{ carboniwalters:cartesianbicategories}. We hope to extend our study to these more general settings.
\end{itemize}

\section{Categories of relations and regular logic}\label{sec:regular}

This section describes regular categories, and their internal regular logic, by way of example; for more information, see~\cite{Butz1998}. We will be led by the construction of $\Rel$ from the category $\cat{Set}$ of sets and functions.
Of course, both categories have the same objects. The issue is how to describe relations and their composition in terms of functions. 

Observe that a relation $R \subseteq A \times B$ induces a pair of functions $R_1 \colon R \to A$ and $R_2 \colon R \to B$, namely $(a,b) \mapsto a$ and $(a,b) \mapsto b$. Moreover, the inclusion $R \hookrightarrow A \times B$ is monic. Equivalently, the two functions $R_1,R_2$ are \emph{jointly monic}, and conversely, any two jointly monic functions $\smash{A \stackrel{R_1}{\leftarrow} R \stackrel{R_2}{\rightarrow} B}$ determine a relation $R \subseteq A \times B$. Thus we can describe relations in terms of functions. Composition of relations is given in these terms by pullback:
\begin{equation}\label{eq:pullbackcomposition}
\begin{pic}[font=\small,xscale=2]
  \node (x) at (-2,0) {$A$};
  \node (y) at (0,0) {$B$};
  \node (z) at (2,0) {$C$};
  \node (r) at (-1,1) {$R$};
  \node (s) at (1,1) {$S$};
  \node (p) at (0,2) {$R \times_B S$};
  \draw[->,font=\tiny] (r) to node[left=1.5mm]{$R_1$} (x);
  \draw[->,font=\tiny] (r) to node[right=1mm]{$R_2$} (y);
  \draw[->,font=\tiny] (s) to node[left=1.5mm]{$S_1$} (y);
  \draw[->,font=\tiny] (s) to node[right=1mm]{$S_2$} (z);
  \draw[->] (p) to (r);
  \draw[->] (p) to (s);
 \node (sr) at (0,1) {$S \circ R$};
 \draw[->>, dotted] (p) to (sr);
 \draw[->,font=\tiny,dotted] (sr) to (x);
 \draw[->,font=\tiny,dotted] (sr) to (z);
\end{pic}\end{equation}
The pullback itself is not good enough, as $R \times_B S \to A \times C$ might not be monic. To ensure this, we have to factor that function as a surjection followed by an injection, and consider its image $S \circ R$. Because pullbacks of surjections are surjections, the subobject $S \circ R$ is unique, giving a well-defined category $\Rel$.

All in all, we have used the following properties of $\cat{Set}$: it has products, pullbacks, a way to factorize morphisms as injections after surjections via their images, and stability of surjections under pullback. 
Generalizing surjections to regular epimorphisms leads to regular categories. 

\begin{definition}
  An epimorphism is \emph{regular} when it is a coequalizer. 
  The \emph{kernel pair} of a morphism $f \colon A \to B$ is a pullback cone of $A \smash{\stackrel{f}{\rightarrow}} B \smash{\stackrel{f}{\leftarrow}} A$.
  A category is \emph{regular} when it has finite limits, coequalizers of kernel pairs, and regular epimorphisms are stable under pullback.
\end{definition}

\begin{example}
  Examples of regular categories abound: 
  any topos, such as $\cat{Set}$;
  any algebraic variety, such as the categories $\cat{Gp}$ of groups,  $\cat{Rng}$ of rings, or $\cat{Vect}_k$ of vector spaces; 
  any category monadic over $\cat{Set}$, such as $\cat{KHaus}$;
  any abelian category, such as the category $\cat{Mod}_R$ of modules over a ring;
  any bounded meet-semilattice, considered as a category;
  any functor category $[\cat{C},\cat{D}]$ to regular $\cat{D}$.
\end{example}

There is another way to consider the construction~\eqref{eq:pullbackcomposition}, namely by \emph{regular logic}.
This is the fragment of first order logic whose formulae use only the connectives $\exists$ and $\wedge$ and equality. 
In $\cat{Set}$, we can describe~\eqref{eq:pullbackcomposition} using a regular formula as
\begin{equation}\label{eq:regularcomposition}
  S \circ R = \{ (a,c) \in A \times C \mid (\exists b \in B)\; R(a,b) \wedge S(b,c) \}.
\end{equation}
This makes sense in any regular category $\cat{C}$: any regular formula $\phi$ whose function symbols are morphisms in $\cat{C}$ and whose relation symbols are subobjects in $\cat{C}$ inductively defines a subobject $\sem{ (a_1,\ldots,a_n) \in A_1 \times \ldots \times A_n \mid \phi} \rightarrowtail A_1 \times \cdots \times A_n$ as follows. Equality $\sem{a \in A \mid f(a)=g(a)}$ is interpreted as the equalizer of $f,g \colon A \to B$. Conjunction $\sem{a \in A \mid R(a) \wedge S(a)}$ is interpreted as the pullback of $R,S \rightarrowtail A$. Existential quantification $\sem{a \in A \mid (\exists b \in B)\, R(a,b)}$ is interpreted as the image 
of $R \rightarrowtail A \times B \smash{\stackrel{\pi_1}{\to}} A$.
This gives two equivalent ways to define our main object of study.

\begin{definition}
  Let $\cat{C}$ be a regular category. Its category $\RelC$ of relations has the same objects as $\cat{C}$, and subobjects $R \rightarrowtail A \times B$ as morphisms $A \to B$, with diagonal maps $A \to A \times A$ as identities, under composition~\eqref{eq:pullbackcomposition}, or equivalently $S \circ R = \sem{(a,c) \in A \times C \mid (\exists b \in B)\, R(a,b) \wedge S(b,c)}$.
\end{definition}

We denote morphisms in $\RelC$ as $R \colon A \relto B$, and the corresponding subobject in $\catC$ as $R \rightarrowtail A \times B$. The category $\RelC$ is a compact dagger category with the product $\times$ of $\cat{C}$ inducing $\otimes$ in $\RelC$ (where it is no longer a cartesian product), and $R^\dag(b,a) \Leftrightarrow R(a,b)$. Every object $A$ is self-dual with canonical cup $\sem{(a,b) \in A\times A \mid a=b}$, and swap maps defined similarly.

\begin{example} $\Rel(\cat{Set})$ is $\Rel$, so this is a genuine generalization; $\Rel(\cat{Gp})$ has subgroups $R \leq G \times H$ as morphisms $G \to H$; $\Rel(\cat{Vect}_k)$ has subspaces $K \leq V \oplus W$ as morphisms $V \to W$ (\textit{cf.}~\cite{bonchietal:signalflow}).
\end{example}

Whenever one can derive an implication $\phi \Rightarrow \psi$ in regular logic, it follows that $\sem{\phi} \leq \sem{\psi}$ as subobjects. This allows us to state and prove (regular) theorems in $\RelC$ as if reasoning in $\Rel$. 
The following lemma works out an example of this technique; the rest of this paper will not be so painstakingly precise.  
As in $\cat{Set}$, a relation $R \colon A \relto A$ is called \emph{symmetric} when $R(a,b) \iff R(b,a)$,
\emph{reflexive} when $\sem{a \in A \mid R(a,a)} = A$, and \emph{transitive} when $R(a,b) \wedge R(b,c) \Rightarrow R(a,c)$, equivalently $R \circ R \leq R$. A symmetric, reflexive and transitive relation is called an \emph{equivalence relation}.
As in any dagger category, a relation is called \emph{positive} when it is of the form $S^{\dagger} \circ S$ for some relation $S$.


\begin{lemma} \label{lem-posrel}
  Any positive relation $R \colon A \relto A$ in $\catC$ regular is symmetric and satisfies $R(a,b) \Rightarrow R(a,a)$.
\end{lemma}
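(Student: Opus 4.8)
The plan is to unfold the definition of positivity and then reason in regular logic exactly as the excerpt licenses. Fix a relation $S \colon A \relto B$ with $R = S^\dag \circ S$. By the composition formula for $\RelC$ together with $S^\dag(b,a') \iff S(a',b)$, we have
\[
  R(a,a') \;\iff\; (\exists b \colon B)\; S(a,b) \wedge S(a',b).
\]
In particular $R(a,a) \iff (\exists b \colon B)\, S(a,b)$, since $S(a,b)\wedge S(a,b)$ is equivalent to $S(a,b)$.

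\emph{Symmetry.} This needs no logic at all: $R^\dag = (S^\dag \circ S)^\dag = S^\dag \circ S^{\dag\dag} = S^\dag \circ S = R$, using that the dagger on $\RelC$ is a contravariant involutive functor. Equivalently, staying inside regular logic, the displayed formula for $R(a,a')$ is symmetric in $a$ and $a'$ by commutativity of $\wedge$, so $\sem{R(a,a')} = \sem{R(a',a)}$ as subobjects of $A \times A$.

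\emph{The implication $R(a,b) \Rightarrow R(a,a)$.} I would derive this purely in regular logic and then invoke the principle $\phi \Rightarrow \psi \;\Longrightarrow\; \sem{\phi}\le\sem{\psi}$. From $S(a,b)\wedge S(a',b)$ one derives $S(a,b)$ by $\wedge$-elimination, and from $S(a,b)$ one derives $S(a,b)\wedge S(a,b)$ by $\wedge$-contraction; since existential quantification is monotone, $(\exists b \colon B)\, S(a,b)\wedge S(a',b)$ entails $(\exists b \colon B)\, S(a,b)\wedge S(a,b)$, which is exactly $R(a,a)$ with the variable $a'$ discarded. Hence $R(a,a') \Rightarrow R(a,a)$ is a derivable implication of regular logic, so the corresponding inequality of subobjects holds in every regular $\catC$; reading it with the free variables $a,b$ gives the statement of the lemma.

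\emph{Main obstacle.} There is essentially no obstacle here — the lemma is meant as an illustration of the technique rather than a hard result. The only point requiring care is the bookkeeping justifying that each rule used (commutativity of $\wedge$, $\wedge$-elimination, $\wedge$-contraction, monotonicity of $\exists$) is admissible in regular logic, i.e.\ corresponds to an actual comparison morphism between the relevant pullbacks, images and equalizers in $\catC$; that admissibility is precisely what makes ``reasoning as if in $\Rel$'' legitimate, and once it is granted the proof is a one-line derivation.
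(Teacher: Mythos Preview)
Your proof is correct and follows essentially the same approach as the paper: symmetry via the dagger involution $(S^\dag\circ S)^\dag = S^\dag\circ S$, and the implication via monotonicity of $\exists$ applied to the containment $S(a,c)\wedge S(a',c)\le S(a,c)$ together with the observation that $(\exists c)\,S(a,c)$ is $R(a,a)$. The paper is slightly terser in displaying this as a chain of subobject inequalities, but the content is identical.
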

\begin{proof}
  Since $R$ is positive we have that $R = S^\dagger \circ S$, for some relation $S: A \relto B$. Then $R$ is symmetric since $R = S^\dagger \circ S = (S^\dagger \circ S)^\dagger = R^\dagger$. Further,
  \begin{align*}
    R = S^{\dagger} \circ S 
    &= \sem{ (a,b) \in A \times A \mid (\exists c \in B) \ S(a,c) \wedge S(b,c) } & \text{(definition of $\circ$ and $\dagger$)}\\ 
    & \leq \sem{ (a,b) \in A \times A \mid (\exists c \in B) \ S(a,c)} &  \text{($\exists$ preserves the order of subobjects)}\\
    &= \sem{ (a,b) \in A \times A \mid R(a,a) } &
  \end{align*}
  demonstrating that $R(a,b) \Rightarrow R(a,a)$. 
\end{proof}

\section{Groupoids and completely positive maps}\label{sec:cp}

Frobenius structures play a central role in categorical quantum mechanics, representing C*-algebras in $\cat{FHilb}$~\cite{vicary:quantumalgebras}. In $\Rel$, they represent groupoids~\cite{Heunen2013}. Theorem~\ref{Thm-OperatorAreGroupoids} below generalizes this to $\RelC$ for any regular $\cat{C}$, by noting that the proof can be stated in regular logic.

\begin{definition}\label{def:frobeniusstructure}
  A \emph{special dagger Frobenius structure} in a dagger monoidal category is an object $A$ with morphisms $\tinymult \colon A \otimes A \to A$ and $\tinyunit \colon I \to A$ satisfying \emph{unitality}, \emph{associativity}, \emph{speciality}, and the \emph{Frobenius law}: 
  \[ 
    \begin{pic}[scale=.4] 
      \node[dot] (d) {};
  	  \draw (d) to +(0,1);
      \draw (d) to[out=0,in=90] +(1,-1) to +(0,-1);
      \draw (d) to[out=180,in=90] +(-1,-1) node[dot] {};
    \end{pic}
    =
    \begin{pic}[scale=.4]
      \draw (0,0) to (0,3);
    \end{pic}
    =
    \begin{pic}[yscale=.4,xscale=-.4]
      \node[dot] (d) {};
      \draw (d) to +(0,1);
      \draw (d) to[out=0,in=90] +(1,-1) to +(0,-1);
      \draw (d) to[out=180,in=90] +(-1,-1) node[dot] {};
    \end{pic}
  \qquad
      \begin{pic}[scale=.4]
      \node[dot] (t) at (0,1) {};
      \node[dot] (b) at (1,0) {};
      \draw (t) to +(0,1);
      \draw (t) to[out=0,in=90] (b);
      \draw (t) to[out=180,in=90] (-1,0) to (-1,-1);
      \draw (b) to[out=180,in=90] (0,-1);
      \draw (b) to[out=0,in=90] (2,-1);
    \end{pic}
    =
    \begin{pic}[yscale=.4,xscale=-.4]
      \node[dot] (t) at (0,1) {};
      \node[dot] (b) at (1,0) {};
      \draw (t) to +(0,1);
      \draw (t) to[out=0,in=90] (b);
      \draw (t) to[out=180,in=90] (-1,0) to (-1,-1);
      \draw (b) to[out=180,in=90] (0,-1);
      \draw (b) to[out=0,in=90] (2,-1);
    \end{pic}
    \qquad
    \begin{pic}
      \node[dot] (t) at (0,.6) {};
      \node[dot] (b) at (0,0) {};
      \draw (b.south) to +(0,-0.3);
      \draw (t.north) to +(0,+0.3);
      \draw (b.east) to[out=30,in=-90] +(.15,.3);
      \draw (t.east) to[out=-30,in=90] +(.15,-.3);
      \draw (b.west) to[out=-210,in=-90] +(-.15,.3);
      \draw (t.west) to[out=210,in=90] +(-.15,-.3);
    \end{pic}
    =
    \begin{pic}
    	\draw(0,0) to (0,1.5);
    \end{pic}
  \qquad
    \begin{pic}[scale =.6]
      \draw (0,0) to (0,1) to[out=90,in=180] (.5,1.5) to (.5,2);
      \draw (.5,1.5) to[out=0,in=90] (1,1) to[out=-90,in=180] (1.5,.5) to (1.5,0);
      \draw (1.5,.5) to[out=0,in=-90] (2,1) to (2,2);
      \node[dot] at (.5,1.5) {};
      \node[dot] at (1.5,.5) {};
    \end{pic}
    =
    \begin{pic}[scale = 0.6, xscale=-1]
      \draw (0,0) to (0,1) to[out=90,in=180] (.5,1.5) to (.5,2);
      \draw (.5,1.5) to[out=0,in=90] (1,1) to[out=-90,in=180] (1.5,.5) to (1.5,0);
      \draw (1.5,.5) to[out=0,in=-90] (2,1) to (2,2);
      \node[dot] at (.5,1.5) {};
      \node[dot] at (1.5,.5) {};
    \end{pic}
  \]
  It is \emph{commutative} when $\tinymult \circ \begin{aligned}\begin{pic}[scale=.25]
    \draw (0,-.5) to[out=80,in=-100] (1,.5);
    \draw (1,-.5) to[out=100,in=-80] (0,.5);
  \end{pic}\end{aligned} = \tinymult$.
\end{definition}

\begin{definition} 
  An \emph{internal category} in a finitely complete category consists of
  objects $C_0$ (objects) and $C_1$ (morphisms), and morphisms $s$ (source), $t$ (target), $u$ (identity), and $m$ (composition):
  \[\begin{pic}[font=\small,xscale=2]
    \node (0) at (-1,0) {$C_0$};
    \node (1) at (0,0) {$C_1$};
    \node (11) at (1,0) {$C_1 \times_{C_0} C_1$};
    \draw[->] (11) to node[above=-1mm] {$m$} (1);
    \draw[->] (0) to node[above=-1mm] {$u$} (1);
    \draw[->] ([yshift=-3mm]1.west) to node[above=-1mm] {$s$} ([yshift=-3mm]0.east);
    \draw[->] ([yshift=3mm]1.west) to node[above=-1mm] {$t$} ([yshift=3mm]0.east);
  \end{pic}\]
  Here $C_1 \times_{C_0} C_1$ is the pullback of $s$ and $t$. These morphisms must satisfy familiar equations representing associativity of composition and usual behaviour of identities. An \emph{internal functor} between internal categories is a pair of morphisms $(f_0, f_1)$ commuting with the above structure. 
  An \emph{internal groupoid} additionally has an inversion morphism $i \colon C_1 \to C_1$ satisfying usual axioms. We write $\cat{Cat}(\catC)$ and $\cat{Gpd}(\catC)$ for the categories of internal categories and groupoids in $\catC$.
\end{definition}

\begin{example} 
  Internal categories or groupoids in $\cat{Set}$ are just (small) categories or groupoids.
  In $\cat{Vect}_k$, internal categories are the same as internal groupoids (see Section~\ref{sec:malcev}), and are also called \emph{2-vector spaces}~\cite[Section~3]{Baez2004}.
  Internal categories in $\cat{Gp}$ are studied under the names \emph{strict 2-groups}~\cite{Baez2004a} and \emph{crossed modules}~\cite[Section~3.3]{Noohi2007}.
\end{example}

\begin{theorem} \label{Thm-OperatorAreGroupoids} 
  For any regular category $\catC$, special dagger Frobenius structures in $\Rel(\catC)$ are the same as internal groupoids in $\catC$. 
  More precisely, a special dagger Frobenius structure $(A, \tinymult, \tinyunit)$ in $\Rel(\catC)$ defines an internal groupoid in $\catC$ with composition $\tinymult$ and identities given by $\tinyunit$.
  Conversely, an internal groupoid $(C_0, C_1, m,s,t,u,i)$ in $\catC$ defines a special dagger Frobenius structure $(A, \tinymult, \tinyunit)$ in $\Rel(\catC)$ with $A=C_1$, $\tinyunit = (u \colon C_0 \rightarrowtail A)$ and $\tinymult = (m \colon A \times A \relto A)$.
\end{theorem}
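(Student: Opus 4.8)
The plan is to lean on the transfer principle recalled in Section~\ref{sec:regular}: a derivable implication $\phi \Rightarrow \psi$ between regular formulae forces $\sem{\phi} \leq \sem{\psi}$ for the interpreted subobjects. So if the entire back-and-forth between special dagger Frobenius structures and internal groupoids can be phrased purely in terms of subobjects and regular formulae, and carried out using only regular reasoning, it is enough to verify it in the single case $\catC = \cat{Set}$ --- where it is the known correspondence between special dagger Frobenius structures in $\Rel$ and small groupoids~\cite{Heunen2013}. Thus the real content is to check that this classical argument never leaves the regular fragment.

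First I would fix the dictionary. A special dagger Frobenius structure $(A,\tinymult,\tinyunit)$ in $\Rel(\catC)$ is a subobject $M \rightarrowtail A \times A \times A$, written $m(a,b,c)$, together with a subobject $U \rightarrowtail A$ (recall $I$ is terminal), written $u(e)$. Using $S \circ R = \sem{(a,c) \mid (\exists b)\, R(a,b) \wedge S(b,c)}$ and $R^\dagger(b,a) \Leftrightarrow R(a,b)$, the four defining equations unwind into biconditionals of regular formulae: \emph{unitality} becomes $(\exists e)(u(e) \wedge m(e,a,b)) \Leftrightarrow a = b \Leftrightarrow (\exists e)(u(e) \wedge m(a,e,b))$; \emph{associativity} becomes $(\exists d)(m(a,b,d) \wedge m(d,c,e)) \Leftrightarrow (\exists d)(m(b,c,d) \wedge m(a,d,e))$; \emph{speciality} becomes $(\exists a,b)(m(a,b,c) \wedge m(a,b,c')) \Leftrightarrow c = c'$; and the \emph{Frobenius law} becomes $(\exists c)(m(a,b,c) \wedge m(a',b',c)) \Leftrightarrow (\exists y)(m(a',y,a) \wedge m(y,b,b'))$. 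Each side of each biconditional is a regular formula, so each is a pair of regular entailments; likewise ``$R$ is a function'' (totality and single-valuedness) is a conjunction of regular sequents, and ``$M \leq N$ as subobjects'' is a single regular entailment.

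Next I would describe the two assignments in this language. From $(A,m,u)$: put $C_1 := A$ and let $C_0 \rightarrowtail A$ be the subobject $U$, recovering $\tinyunit$ as the monic $u \colon C_0 \rightarrowtail A$. The ``$\Rightarrow$'' half of speciality makes $m$ single-valued, so $M$ is the graph of a partial map $A \times A \to A$; the ``$\Leftarrow$'' halves of unitality (with $b=a$) make the relations ``$e$ is an identity with $m(e,a,a)$'' and ``$e$ is an identity with $m(a,e,a)$'' total, and --- with speciality and the Frobenius law --- single-valued, hence functions $s,t \colon A \to C_0$; one then checks $m(a,b,c) \Rightarrow s(a) = t(b)$ and that $m$ is defined on every composable pair, so $m$ descends to a function $C_1 \times_{C_0} C_1 \to C_1$ satisfying the internal-category axioms (these come from associativity and unitality). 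Inversion $i \colon A \to A$ sends $a$ to the unique $b$ with $(\exists e)(u(e) \wedge m(a,b,e))$, its totality and single-valuedness again flowing from speciality and the Frobenius law, and the groupoid axioms for $i$ follow. Conversely, from an internal groupoid $(C_0,C_1,m,s,t,u,i)$: put $A := C_1$, read $\tinyunit$ as the relation $I \relto A$ given by $u \colon C_0 \rightarrowtail A$, and $\tinymult$ as the image of $(\pi_1,\pi_2,m) \colon C_1 \times_{C_0} C_1 \to A \times A \times A$, i.e.\ the graph of composition; the four Frobenius-structure equations then reduce to the four biconditionals above, each the routine element-wise verification for ordinary groupoids, with $i$ entering only in the Frobenius law and its mirror image (where the required witness $y$ is obtained by composing with an inverse). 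The two assignments are visibly mutually inverse, since each fixes the underlying data $(A,m,u)$. As every step is a regular entailment or a ``this relation is a function'' assertion, the transfer principle promotes the $\cat{Set}$-case to arbitrary regular $\catC$.

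The hard part is the middle of the forward direction: extracting genuine \emph{morphisms} of $\catC$ --- the source and target maps $s,t \colon A \to C_0$ and the inversion $i$ --- from the relational data, which requires proving the totality and, above all, the single-valuedness of the associated relations from speciality and the Frobenius law, and then confirming that $M$ and $U$ really do factor as the groupoid structure maps over the pullback $C_1 \times_{C_0} C_1$. This combinatorial core is precisely the substance of the $\Rel$ result; once one has checked that its proof only ever manipulates conjunctions, existentials and equalities of the relations above, nothing further is needed for general $\catC$.
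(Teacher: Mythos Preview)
Your proposal is correct and follows essentially the same approach as the paper: both the main-text proof and the appendix make precisely the point that the $\cat{Set}$ argument of~\cite{Heunen2013} is carried out entirely in regular logic, translate the Frobenius axioms into the regular formulae you list, and extract the source, target and inversion morphisms by showing the corresponding relations are total and single-valued. The only minor discrepancy is bookkeeping about which axiom does which job---the paper derives single-valuedness of $s,t$ from unitality together with \emph{associativity} rather than the Frobenius law---but this does not affect the strategy.
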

\begin{proof}
  Observe that the proof for the case $\catC = \cat{Set}$ (see~\cite[Theorems~7 and~12]{Heunen2013}) can be carried out entirely in regular logic. For details, see Appendix~\ref{appendix:OpStructsAreGroupoids}.
\end{proof}

\begin{example} \label{ex-indiscgpd} For any object $A$ of $\RelC$, there is a canonical   special dagger Frobenius structure $ { (A \times A,
\begin{pic}[dotpic,scale=0.33]
		\node [style=none] (0) at (-1.25, -1.25) {};
		\node [style=none] (1) at (-5.75, -1.25) {};
		\node [style=none] (2) at (-2.75, 1.25) {};
		\node [style=none] (3) at (-4.25, 1.25) {};

		\node [style=none] (6) at (-4.25, -1.25) {};
		\node [style=none] (7) at (-2.75, -1.25) {};

		\draw [in=-90, out=90] (0.center) to (2.center);
		\draw [in=90, out=-90] (3.center) to (1.center);
		\draw [in=90, out=90, looseness=2.00] (6.center) to (7.center);
\end{pic}
,
\begin{pic}[dotpic,scale=0.33]
		\node [style=none] (4) at (3.75, 0.25) {};
		\node [style=none] (5) at (5.25, 0.25) {};
		\draw [in=-90, out=-90, looseness=2.00] (4.center) to (5.center);	
\end{pic}
) }$ which corresponds to the \emph{indiscrete groupoid}  
  \[\begin{pic}[font=\small,xscale=2.5]
    \node (0) at (-1,0) {$A$};
    \node (1) at (0,0) {$A \times A$};
    \node (11) at (1,0) {$A \times A \times A$};
    \draw[->] (11) to node[above=-1mm] {$\pi_{1,3}$} (1);
    \draw[->] (0) to node[above=-1mm] {$\Delta$} (1);
    \draw[->] ([yshift=-3mm]1.west) to node[above=-1mm] {$\pi_1$} ([yshift=-3mm]0.east);
    \draw[->] ([yshift=3mm]1.west) to node[above=-1mm] {$\pi_2$} ([yshift=3mm]0.east);
    \draw[->] (1.110) to[out=110,in=180] +(.05,.3) to[out=0,in=45] node[right=-1mm]{$\sigma$} (1.70);
  \end{pic}\]
  on $A$, having a unique morphism from $a$ to $b$ for each pair $(a,b) \in A \times A$. The identities are given by the diagonal $\Delta = \langle \id{A}, \id{A} \rangle \colon A \to A \times A$, while the inversion is the swap $\sigma = \langle \pi_2, \pi_1 \rangle \colon A \times A \to A \times A$.
\end{example}

Dagger Frobenius structures in $\cat{C}$ form the objects of a new category $\CP(\cat{C})$~\cite{Coecke2013i,heunenkissingervicary:cp}. The key example is that $\CP(\cat{FHilb})$ is the category of finite-dimensional C*-algebras and completely positive maps. We briefly recall the relevant form of this \emph{CP construction}; see Appendix~\ref{appendix:cp} for a proof that this is indeed a well-defined category.
Recall that \emph{positive} (endo)morphisms in a dagger category are those the form $g^\dag \circ g$ for some morphism $g$. 

\begin{definition}\label{def:cp}
  Let $\catC$ be a compact dagger category. Then $\CP(\catC)$ is the category whose objects are special dagger Frobenius structures $(A, \tinymult)$ in $\catC$, and where morphisms $(A, \tinymult[white_dot]) \sxto{f} (B, \tinymult[dot])$ are morphisms $A \sxto{f} B$ in $\catC$ whose \emph{Choi matrix} is positive: 
  \[
    \begin{pic}[xscale=-1]
     \draw (0,.2) to (0,1) to[out=90,in=180] (.5,1.5) to (.5,1.8);
     \draw (.5,1.5) to[out=0,in=90] (1,1) to[out=-90,in=180] (1.5,.5) to (1.5,.2);
     \draw (1.5,.5) to[out=0,in=-90] (2,1) to (2,1.8);
     \node[morphism] at (1,1) {$f$};
     \node[dot] at (.5,1.5) {};
     \node[white_dot] at (1.5,.5) {};
    \end{pic}
    \quad = \quad
    \begin{pic}
     \node[morphism, hflip] (g) at (0,1) {$g$};
     \node[morphism] (f) at (0,.4) {$g$};
     \draw ([xshift=1mm]f.south east) to +(0,-.3);
     \draw ([xshift=-1mm]f.south west) to +(0,-.3);
     \draw (g.south) to (f.north);
     \draw ([xshift=1mm]g.north east) to +(0,.3);
     \draw ([xshift=-1mm]g.north west) to +(0,.3);
    \end{pic}
  \]
  for some $g \colon A \otimes B \to X$ in $\catC$. 
  Such morphisms $f$ are called \emph{completely positive}.
\end{definition}

To classify completely positive morphisms in $\RelC$, we need to identify when a relation is positive, \textit{i.e.}\ of the form $R = \sem{(a, c) \in A\times A \mid (\exists b \in B)\, S(a,b) \wedge S(c,b) }$ for some relation $S \colon A \relto B$. Lemma~\ref{lem-posrel} showed that positive relations in any regular category satisfy
\begin{equation} \label{posCondn}
  R(a,b) \Rightarrow R(a,a) \wedge R(b,a).
\end{equation}  
It follows that completely positive morphisms $R \colon (A,\tinymult[white_dot],\tinyunit[white_dot]) \to (B,\tinymult[black_dot],\tinyunit[black_dot])$ \emph{respect inverses}~\cite[7.2]{Coecke2013i}:
\begin{equation} \label{CPRelCondn}
  R(a,b) \Rightarrow R(a^{-1}, b^{-1}) \wedge R(\id{\text{dom}(a)}, \id{\text{dom}(b)}),
\end{equation} 
where $(A,\tinymult[white_dot],\tinyunit[white_dot])$ and $(B,\tinymult[black_dot],\tinyunit[black_dot])$ are regarded as internal groupoids in $\catC$. However, the converse of either statement need not hold.

\begin{proposition} \label{posRegProp} 
  The following are equivalent for a regular category:
  \begin{enumerate}[(a)]\itemsep0em 
	\item every reflexive symmetric relation is positive;
	\item a relation is positive if and only if it satisfies~\eqref{posCondn};
	\item a relation between dagger Frobenius structures is completely positive iff it respects inverses.
\end{enumerate}
\end{proposition}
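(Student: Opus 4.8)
The plan is to establish $(a)\Leftrightarrow(b)$ directly and $(b)\Leftrightarrow(c)$ through the Choi matrix; in each statement only one direction has content, since by Lemma~\ref{lem-posrel} a positive relation always satisfies~\eqref{posCondn} (and, as recorded above, this already gives ``completely positive $\Rightarrow$ respects inverses''), so what must be shown is that~\eqref{posCondn} forces positivity and that respecting inverses forces complete positivity. For $(b)\Rightarrow(a)$, a reflexive symmetric relation plainly satisfies~\eqref{posCondn}. For $(a)\Rightarrow(b)$, let $R\colon A\relto A$ satisfy~\eqref{posCondn}; applying~\eqref{posCondn} also to $R(b,a)$ yields $R(a,b)\Rightarrow R(a,a)\wedge R(b,b)$, so $R$ factors through $D\times D$ for the subobject $D=\sem{a\in A\mid R(a,a)}\rightarrowtail A$ and restricts there to a reflexive symmetric $R_0\colon D\relto D$. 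Writing $\iota\colon D\relto A$ for the partial injection of $D\rightarrowtail A$ (so that $\iota^\dagger\circ\iota=\id{D}$), one checks $R=\iota\circ R_0\circ\iota^\dagger$, and then $R_0=S_0^\dagger\circ S_0$ from (a) gives $R=(S_0\circ\iota^\dagger)^\dagger\circ(S_0\circ\iota^\dagger)$, positive.

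The bridge for $(b)\Leftrightarrow(c)$ is an explicit description, in regular logic and via Theorem~\ref{Thm-OperatorAreGroupoids}, of the Choi matrix of a relation $R\colon A\relto B$ between internal groupoids: it is the endorelation $\widetilde R$ on $A\otimes B$ that, modulo the evident source/target composability conditions, relates $(a,b)$ to $(a',b')$ exactly when $R(aa'^{-1},\,b^{-1}b')$ (up to the conventions of the graphical calculus). A short calculation from this formula shows $\widetilde R$ satisfies~\eqref{posCondn} iff $R$ respects inverses~\eqref{CPRelCondn}; the forward half is essentially what underlies~\eqref{CPRelCondn} above. Granting it, $(b)\Rightarrow(c)$ is immediate: $R$ is completely positive iff $\widetilde R$ is positive, iff (by (b)) $\widetilde R$ satisfies~\eqref{posCondn}, iff $R$ respects inverses.

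For $(c)\Rightarrow(b)$, I would take $R'\colon X\relto X$ satisfying~\eqref{posCondn} and view the subobject $R'\rightarrowtail X\times X$ as a relation $\rho\colon X\times X\relto 1$, that is, as a morphism from the indiscrete groupoid on $X$ (Example~\ref{ex-indiscgpd}: morphism object $X\times X$, with inversion $(x,y)\mapsto(y,x)$ and $\id{\dom(x,y)}=(x,x)$) to the terminal groupoid (morphism object $1$, with the trivial Frobenius structure on $I$). Then~\eqref{CPRelCondn} for $\rho$ reads $R'(x,y)\Rightarrow R'(y,x)\wedge R'(x,x)$, so $\rho$ respects inverses exactly when $R'$ satisfies~\eqref{posCondn}; and the Choi-matrix formula above specializes --- using symmetry of $R'$ --- to $\widetilde\rho\cong R'\otimes\id{X}$. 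Hence by (c) the relation $\rho$ is completely positive, so $R'\otimes\id{X}$ is positive; and since $\RelC$ is dagger compact with $\eta^\dagger=\varepsilon$, whenever $R'\otimes\id{X}=S^\dagger\circ S$ one has $R'=T^\dagger\circ T$ for $T=(S\otimes\id{X})\circ(\id{X}\otimes\eta)$ (plugging the cup and cap into the spare wire), so $R'$ is positive. With Lemma~\ref{lem-posrel} this yields (b).

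The main obstacle is the Choi-matrix computation: working out, in regular logic, the formula for $\widetilde R$ between internal groupoids while keeping the source/target bookkeeping correct, and reading off from it both that $\widetilde R$ satisfies~\eqref{posCondn} iff $R$ respects inverses and that $\widetilde\rho\cong R'\otimes\id{X}$ in the terminal-groupoid case. After that, the restriction argument in $(a)\Rightarrow(b)$ and the compact-closed contraction in $(c)\Rightarrow(b)$ are routine bookkeeping.
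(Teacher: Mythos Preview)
Your argument is correct and matches the paper's: restriction to the diagonal for $(a)\Rightarrow(b)$, the explicit Choi-matrix formula for $(b)\Leftrightarrow(c)$, and the indiscrete-groupoid trick for the step back from $(c)$. The paper organizes it as the cycle $(a)\Rightarrow(b)\Rightarrow(c)\Rightarrow(a)$, with $(c)\Rightarrow(a)$ viewing a reflexive symmetric $R$ directly as a \emph{state} of the indiscrete groupoid on $A$ and observing that positivity of its Choi matrix amounts (via exactly the positively-monoidal reduction $R'\otimes\id{X}\Rightarrow R'$ you spell out, cf.\ Appendix~\ref{appendix:cp}) to positivity of $R$ itself.
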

\begin{proof} 
 For (a $\Rightarrow$ b), let $R: A \relto A$ be any relation satisfying \eqref{posCondn}, and define $U = \llbracket a \in A \mid R(a,a) \rrbracket$. Then $R$ restricted to $U$ is reflexive and symmetric, hence equal to $S^\dag \circ S$ for some relation $S$. Then $R$ is equal to $S'^\dag S'$ where $S' = \llbracket (a,u) \in A \times U \mid (a=U(u)) \wedge S(a) \rrbracket$.
  \\ For (b $\Rightarrow$ c): the Choi matrix $S$ of a relation $R$ is given by $S((a,b),(a',b')) \iff R(a'^{-1} \circ a,b' \circ b^{-1})$, which can be seen to satsify \eqref{posCondn} iff $R$ itself respects inverses.
  \\ To see (c $\Rightarrow$ a), observe that a reflexive symmetric relation $R \rightarrowtail A \times A$ defines a state $R'$ in $\CP(\RelC)$ of the special dagger Frobenius structure given by the indiscrete groupoid on $A$ (Example~\ref{ex-indiscgpd}). Positivity of the Choi matrix of $R'$ in this case means that $R$ is itself positive.
\end{proof}

A category satisfying the properties of the previous proposition is called \emph{positively regular}. 
We can now generalize the known description of $\CP(\Rel)$ of~\cite[Proposition 7.3]{Coecke2013i}.

\begin{corollary} \label{CP[PosReg]} 
  Let $\catC$ be a positively regular category. Then $\CP(\RelC)$ is equivalent to the category of groupoids and relations in $\catC$ that respect inverses.
  \qed
\end{corollary}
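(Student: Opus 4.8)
The plan is to assemble the corollary from Theorem~\ref{Thm-OperatorAreGroupoids} and Proposition~\ref{posRegProp}; almost no new work is required, only careful bookkeeping. First I would pin down the target category precisely: let $\cat{D}$ be the category whose objects are internal groupoids in $\catC$, whose morphisms $G \to H$ are subobjects $R \rightarrowtail G_1 \times H_1$ — equivalently relations $G_1 \relto H_1$ in $\RelC$ — satisfying the inverse-respecting condition~\eqref{CPRelCondn}, with composition given by relational composition in $\RelC$ and the identity on $G$ the diagonal of $G_1$. One first checks $\cat{D}$ is well defined: the diagonal trivially respects inverses, and closure of inverse-respecting relations under composition follows because, by Proposition~\ref{posRegProp}(c), these relations are exactly the completely positive relations between the associated Frobenius structures, and $\CP(\RelC)$ is already known to be a category (Appendix~\ref{appendix:cp}).

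Next I would define the comparison functor $F \colon \CP(\RelC) \to \cat{D}$. On objects, $F$ is the assignment of Theorem~\ref{Thm-OperatorAreGroupoids}, sending a special dagger Frobenius structure $(A,\tinymult,\tinyunit)$ to the internal groupoid with $C_1 = A$, composition $\tinymult$, and identities $\tinyunit$; that theorem shows this is a bijection on objects, since the two constructions there are mutually inverse. On morphisms, a morphism $(A,\tinymult[white_dot],\tinyunit[white_dot]) \to (B,\tinymult[black_dot],\tinyunit[black_dot])$ of $\CP(\RelC)$ is, by Definition~\ref{def:cp}, a completely positive relation $R \colon A \relto B$; since $\catC$ is positively regular, Proposition~\ref{posRegProp}(c) identifies these with the relations $G_1 \relto H_1$ respecting inverses, i.e.\ the morphisms of $\cat{D}$. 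Hence $F$ is the identity on underlying relations. Because composition and identities in both $\CP(\RelC)$ and $\cat{D}$ are nothing but relational composition and diagonals in $\RelC$, $F$ is automatically functorial; being bijective on objects and fully faithful, it is an isomorphism of categories, and in particular an equivalence as claimed.

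The one genuinely delicate point, which I would do in regular logic rather than string diagrams, is the compatibility underlying the morphism-level identification: one must confirm that when $(A,\tinymult[white_dot],\tinyunit[white_dot])$ and $(B,\tinymult[black_dot],\tinyunit[black_dot])$ are read as internal groupoids via Theorem~\ref{Thm-OperatorAreGroupoids}, the Choi matrix of a relation $R\colon A\relto B$ computed with $\tinymult[white_dot],\tinymult[black_dot]$ (and the cups/caps of those Frobenius structures) is exactly the relation $S((a,b),(a',b')) \iff R(a'^{-1}\circ a,\, b'\circ b^{-1})$ used in the proof of (b~$\Rightarrow$~c), so that positivity of the Choi matrix becomes condition~\eqref{CPRelCondn} for $R$. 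This is a direct unfolding, using that the cup of $(A,\tinymult[white_dot])$ is $\sem{(a,a')\mid a'^{-1}\circ a \defined}$ and similarly for $B$, as established in the proof of Theorem~\ref{Thm-OperatorAreGroupoids}. Once this is in place, everything else is formal, and no step beyond this bookkeeping should present an obstacle.
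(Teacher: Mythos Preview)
Your proposal is correct and is precisely the argument the paper intends: the corollary is stated with a bare \qed, treating it as immediate from Theorem~\ref{Thm-OperatorAreGroupoids} (for objects) and Proposition~\ref{posRegProp}(c) (for morphisms), which is exactly the assembly you carry out. Your ``delicate point'' about the Choi matrix unfolding to $S((a,b),(a',b')) \iff R(a'^{-1}\circ a,\, b'\circ b^{-1})$ is already handled in the paper's proof of (b~$\Rightarrow$~c) in Proposition~\ref{posRegProp}, so nothing further is needed.
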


\begin{example} 
  $\cat{Set}$ is positively regular, since any relation $R$ satisfying~\eqref{posCondn} is equal to $S^\dag \circ S$ with $S = \left\{(a,a,b), (a,b,a) \mid R(a,b) \right\}$. More generally, any coherent category, such as $\cat{KHaus}$, is positively regular and hence so is any topos. Section~\ref{sec:malcev} will show that $\cat{Gp}$ and $\cat{Vect}_k$ are positively regular.

  The category of semigroups satisfying $(\forall x,y)\, xyx = yxy$ is regular but not positively regular; positive relations satisfy $R(a,b) \wedge R(c,d) \Rightarrow R(acb,dbc)$, but relations satisfying~\eqref{posCondn} need not. 
\end{example}

\section{Mal'cev categories} \label{sec:malcev}

This section studies a broad class of regular categories for which the CP construction takes a very natural form, allowing us to bypass the unusual notion of a relation respecting inverses.

\begin{definition} 
  A category $\catC$ with finite limits is a \emph{Mal'cev category} if every reflexive relation $R \rightarrowtail A \times A$ in $\catC$ is an equivalence relation.
\end{definition}

\begin{proposition} \label{groupodclosedequalsMal'cev} 
  The following are equivalent for a regular category:
  \begin{enumerate}[(a)]\itemsep0pt
	\item inverse-respecting subobjects $R \rightarrowtail A_1$ of internal groupoids are closed under composition;
    \item every relation $R \rightarrowtail A \times A$ is \emph{difunctional}: 
      $(\forall a,b,c,d \in A)\, R(a,b) \wedge R(c,b) \wedge R(c,d) \Rightarrow R(a,d)$;
	\item the category is Mal'cev. 
  \end{enumerate}
\end{proposition}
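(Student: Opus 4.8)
The plan is to prove $(b)\Leftrightarrow(c)$ and $(a)\Leftrightarrow(c)$. The first is essentially the classical characterization of Mal'cev categories through difunctionality of relations; I would reprove it in regular logic so the argument holds in any regular $\catC$ unchanged. The substantive work is in $(a)\Leftrightarrow(c)$, which must be routed through the internal-groupoid description.

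\textbf{$(b)\Leftrightarrow(c)$.} A reflexive relation $R$ that is difunctional is an equivalence relation: instantiating the implication in $(b)$ with $(a,b,c,d):=(b,b,a,a)$ yields (using reflexivity) $R(a,b)\Rightarrow R(b,a)$, and with $(a,b,c,d):=(a,b,b,c)$ it yields $R(a,b)\wedge R(b,c)\Rightarrow R(a,c)$; this gives $(b)\Rightarrow(c)$. Conversely, given any $R\rightarrowtail A\times A$ with legs $R_1,R_2\colon R\to A$, form the relation $\theta$ on the \emph{object} $R$ by $\theta(x,y):\Leftrightarrow R(R_1 x,R_2 y)$, that is, the pullback of $R\rightarrowtail A\times A$ along $R_1\times R_2\colon R\times R\to A\times A$. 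Since $R_1\times R_2$ precomposed with the diagonal of $R$ is the subobject inclusion $R\rightarrowtail A\times A$ itself, that diagonal factors through $\theta$, so $\theta$ is reflexive; under $(c)$ it is then an equivalence relation, in particular transitive, and transitivity of $\theta$ --- read back through $R_1$ and $R_2$, feeding in the witnesses of $R(a,b)$, $R(c,b)$, $R(c,d)$ --- is exactly $R(a,d)$, i.e.\ difunctionality. Only images, pullbacks, equalizers and the rule ``$\phi\Rightarrow\psi$ derivable entails $\sem{\phi}\leq\sem{\psi}$'' are used, so the argument is regular.

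\textbf{$(a)\Leftrightarrow(c)$.} For $(c)\Rightarrow(a)$: given an internal groupoid with arrow object $A_1$ and an inverse-respecting subobject $R\rightarrowtail A_1$, form the relation $T$ on $A_1$ by $T(f,g):\Leftrightarrow(\dom f=\dom g)\wedge R(g\circ f^{-1})$; because $R$ contains the relevant identities and is stable under inversion, $T$ is reflexive, hence an equivalence relation by $(c)$, and transitivity of $T$ is precisely closure of $R$ under the groupoid multiplication --- and the same device applied to the subobject built from two inverse-respecting $R$ and $S$ shows their composite again respects inverses. For $(a)\Rightarrow(c)$: specialize $(a)$ to the indiscrete groupoid on an arbitrary object $A$ (Example~\ref{ex-indiscgpd}), where $A_1=A\times A$, the inverse-respecting subobjects of $A_1$ are exactly the relations satisfying~\eqref{posCondn} (symmetric and reflexive on their domain, as in Lemma~\ref{lem-posrel}), and the induced composition of subobjects is ordinary relational composition. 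Closure of this class under composition forces, for reflexive symmetric $R$ and $S$, that $R\circ S$ again satisfy~\eqref{posCondn}, hence be symmetric; since $(R\circ S)^{\op}=S^{\op}\circ R^{\op}=S\circ R$, this says reflexive symmetric relations permute, which is one of the standard equivalent formulations of the Mal'cev condition, so $(c)$ follows.

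The step I expect to be the main obstacle is the construction side of $(a)\Leftrightarrow(c)$: choosing the right auxiliary relation on $A_1$ attached to one or two inverse-respecting subobjects, handling the partially-defined multiplication of a \emph{general} internal groupoid carefully, and checking that its transitivity (respectively permutability) translates exactly into ``respects inverses and is closed under composition''. The $(b)\Leftrightarrow(c)$ core, in contrast, is classical and short.
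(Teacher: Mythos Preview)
Your $(b)\Leftrightarrow(c)$ is the standard argument and matches the paper's citation of it as well-known. Your $(c)\Rightarrow(a)$ is close to the paper's $(b)\Rightarrow(a)$ in spirit, but note a slip: your relation $T(f,g)\Leftrightarrow(\dom f=\dom g)\wedge R(g\circ f^{-1})$ is \emph{not} reflexive on all of $A_1$, since $T(f,f)$ requires $R(\id{\cod f})$ and an inverse-respecting $R$ need not contain every identity. This is easily repaired by restricting $T$ to the subobject $\{f\mid R(\id{\cod f})\}$, which does contain all the elements your transitivity argument uses, but as written ``$T$ is reflexive'' is false.

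The real problem is your $(a)\Rightarrow(c)$. You read ``inverse-respecting subobjects are closed under composition'' as a closure property of the \emph{class} of such subobjects under relational (groupoid-induced) composition of two subobjects, and then deduce permutability of reflexive symmetric relations. But the paper's $(a)$ says that each individual inverse-respecting $R$ is closed under the groupoid multiplication, i.e.\ $R(a)\wedge R(b)\wedge(\dom a=\cod b)\Rightarrow R(a\circ b)$; in the indiscrete groupoid this just says every relation satisfying~\eqref{posCondn} is \emph{transitive}. From that alone your permutability argument does not go through: knowing that two reflexive symmetric relations $R,S$ are each transitive tells you nothing about the symmetry of $R\circ S$. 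So your $(a)\Rightarrow(c)$ establishes the wrong implication.

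The paper instead proves $(a)\Rightarrow(b)$ directly with a different trick: given an arbitrary relation $R\rightarrowtail A\times A$, it builds a relation $S$ on the \emph{object} $R$ by $S((a,b),(c,d))\Leftrightarrow R(a,d)\wedge R(c,b)$. This $S$ is genuinely reflexive and symmetric on $R$ (since $(a,b)\in R$ already), hence inverse-respecting in the indiscrete groupoid on $R$; by $(a)$ it is transitive, and transitivity of $S$ unwinds to difunctionality of $R$. The key idea you are missing is to place the auxiliary relation on $R$ rather than on $A$, which is what makes reflexivity automatic and lets the single-subobject reading of $(a)$ do the work.
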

\begin{proof}
  For (a $\Rightarrow$ b), let $R \rightarrowtail A \times A$ be a relation. 
  Define a new relation $S \rightarrowtail R \times R$ by setting $S((a,b),(c,d))$ $\Leftrightarrow$ $R(a,d) \wedge R(c,b)$. 
  Then $S$ is reflexive, because it is defined on $R$, and symmetric. Thus $S$ defines an inverse-respecting subobject of the indiscrete groupoid on $R$, and hence is closed under composition in $R \times R$. That is, $S$ is transitive. Now suppose $R(a,b)$, $R(c,b)$, and $R(c,d)$. Then $S((a,b),(c,b))$ and $S((c,b),(c,d))$, hence $S((a,b),(c,d))$, and so $R(a,d)$.

  To see (b $\Rightarrow$ a), consider an inverse-respecting subobject $R \rightarrowtail A_1$, and set $S = \sem{(a,b) \in A_1 \times A_1 \mid (\text{dom}(a)= \text{cod}(b)) \wedge R(a,b) }$. For composable $a$ and $b$ in $A_1$, let $x = \text{dom}(a) = \text{cod}(b)$. Since $R$ is closed under $\id{\text{dom}(-)}$ and $\id{\text{cod}(-)}$, we have $S(\id{x},b)$, $S(a, \id{x})$, and $S(\id{x},\id{x})$. Now $S(a,b)$ by difunctionality, that is, $R(a \circ b)$, so $R$ is closed under composition. 

  Finally, (b $\Leftrightarrow$ c) is well-known~\cite[Proposition~1.2]{A.CarboniM.C.Pedicchio}.
\end{proof}

\begin{example} 
  Mal'cev categories were first studied in the context of universal algebra. An algebraic variety is Mal'cev precisely when it contains an operation $p(x,y,z)$ with $p(x,y,y) = p(y,y,x) = x$. Hence the categories $\cat{Gp}$ and $\cat{Vect}_k$ are regular Mal'cev, as are the categories of Lie algebras, abelian groups, rings, commutative rings, associative algebras, quasi-groups and Heyting algebras. Any abelian category such as $\cat{Mod}_R$ is Mal'cev. The opposite category of any topos is regular Mal'cev.
\end{example}

In a Mal'cev category $\catC$, the forgetful functor $\cat{Gpd}(\catC) \to \cat{Cat}(\catC)$ is an isomorphism, that is, every internal category in $\catC$ uniquely defines an internal groupoid~\cite[Theorem 2.2]{A.CarboniM.C.Pedicchio}. Moreover, $\cat{Gpd}(\catC)$ is regular when $\cat{C}$ is Mal'cev~\cite[Proposition~3.1 and Theorem~3.2]{Gran1999}. In this case $\Rel(\cat{Gpd}(\catC))$, which we now describe, makes sense. By contrast, $\Rel(\cat{Gpd}(\cat{Set}))$ is ill-defined. 

\begin{lemma}\label{subobinGpdC}
  Let $\catC$ be a regular category. Subobjects of $(A_0, A_1)$ in $\cat{Gpd}(\catC)$ are subobjects $R \rightarrowtail A_1$ in $\catC$ that are closed under $\id{\text{dom}(-)}$, inverses, and composition. 
\end{lemma}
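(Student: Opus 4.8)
The plan is to identify a subobject of $(A_0,A_1)$ in $\cat{Gpd}(\catC)$ with its morphism-component $R \rightarrowtail A_1$, the point being that the object-component is then forced. First I would recall that limits in $\cat{Gpd}(\catC)$ are computed componentwise in $\catC$, so a morphism of internal groupoids is monic exactly when both of its components are; hence a subobject of $(A_0,A_1)$ amounts to a pair of subobjects $R_0 \rightarrowtail A_0$ and $R_1 \rightarrowtail A_1$ in $\catC$ along which all the structure maps $s,t,u,m,i$ restrict, making $(R_0,R_1)$ an internal subgroupoid and the inclusion an internal functor. (The internal-groupoid axioms for $(R_0,R_1)$ are then automatic: they are equations between morphisms into $A_0$ or $A_1$, which are monic, and they already hold in $(A_0,A_1)$.)

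Next I would run the two directions. From such a subobject, the component $R_1 \rightarrowtail A_1$ is closed under inverses because $i$ restricts, closed under composition because $m$ restricts (that is, $R_1(a) \wedge R_1(b) \wedge \text{cod}(b)=\text{dom}(a) \Rightarrow R_1(a\circ b)$), and closed under $\id{\text{dom}(-)}$ because if $R_1(a)$ then $\text{dom}(a)=s(a)$ lies in $R_0$ (as $s$ restricts to $R_1 \to R_0$), so $\id{\text{dom}(a)}=u(\text{dom}(a))$ lies in $R_1$ (as $u$ restricts to $R_0 \to R_1$). Conversely, from any $R \rightarrowtail A_1$ with these three closure properties I would define the object part by regular logic, $R_0 := \sem{x \in A_0 \mid R(\id{x})}$, equivalently the pullback of $R$ along $u$, and check that the structure maps restrict to $(R_0,R)$: $u$ to $R_0 \to R$ by the definition of $R_0$; $s$ to $R \to R_0$ since $R(a) \Rightarrow R(\id{\text{dom}(a)})$, that is, $\text{dom}(a) \in R_0$; $t$ to $R \to R_0$ since $\text{cod}(a)=\text{dom}(a^{-1})$ and $R(a^{-1})$ by closure under inverses; $i$ to $R \to R$ by closure under inverses; and $m$ to composable pairs by closure under composition. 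Thus $(R_0,R)$ is an internal subgroupoid of $(A_0,A_1)$.

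Finally I would check that these assignments are mutually inverse and preserve the subobject ordering, so that they give an isomorphism between the poset of subobjects of $(A_0,A_1)$ in $\cat{Gpd}(\catC)$ and the poset of subobjects $R \rightarrowtail A_1$ in $\catC$ closed under $\id{\text{dom}(-)}$, inverses and composition. Out-and-back from $R$ recovers $R$ immediately; out-and-back from $(R_0,R_1)$ gives $(u^{-1}(R_1), R_1)$, and $u^{-1}(R_1)=R_0$ because $u$ restricts to $R_0 \to R_1$ (giving $R_0 \leq u^{-1}(R_1)$) while $R_1(\id{x})$ forces $x=s(\id{x}) \in R_0$ (giving the reverse inclusion). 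I expect the only step that needs genuine care to be the first one — pinning down that a subobject in $\cat{Gpd}(\catC)$ is precisely a componentwise-compatible pair of subobjects; granting that, the rest is routine restriction of equations along monos, in the regular-logic idiom the paper has already adopted.
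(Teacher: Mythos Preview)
Your proposal is correct and more complete than the paper's own proof, which is quite terse. The main difference lies in how you handle the monicity step. You invoke the general fact that limits in $\cat{Gpd}(\catC)$ are computed componentwise, whence a morphism of internal groupoids is monic iff both components are. The paper instead argues directly: first it observes that $f_0$ is recoverable from $f_1$ via $c \mapsto \text{dom}(f_1(\id{c}))$, and then, to show that $f$ monic in $\cat{Gpd}(\catC)$ forces $f_1$ monic in $\catC$, it forms the kernel pair $K_1 = \sem{(x,y)\mid f_1(x)=f_1(y)}$ of $f_1$, notes this underlies a subgroupoid $K \leq C\times C$ with $f\circ\pi_1 = f\circ\pi_2$, and concludes $\pi_1=\pi_2$ from monicity of $f$. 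Your route is cleaner if one is happy to cite the componentwise-limits fact; the paper's route is more self-contained but only sketches one direction of the correspondence, leaving the converse and the poset-isomorphism check implicit. Your explicit treatment of both directions, including the verification that $u^{-1}(R_1)=R_0$, is a genuine improvement in rigour.
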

\begin{proof}
  Morphisms $f = (f_0,f_1) \colon (C_0, C_1) \to (D_0,D_1)$ in $\cat{Gpd}(\catC)$ are determined by morphisms $f_1 \colon C_1 \to D_1$ respecting the groupoid structure, since $f_0$ can be reconstructed as $c \mapsto \text{dom}(f_1(\id{c}))$ for $c \in C_0$. Now $K_1 = \sem{ (x,y) \in C_1 \times C_1 \mid \  f_1(x) = f_1(y)}$ determines a subgroupoid $K \leq C \times C$, with $f \circ \pi_1 = f \circ \pi_2 \colon K \to D$. If $f$ is monic in $\cat{Gpd}(\catC)$, then $\pi_1 = \pi_2$ and so $f_1$ is monic in $\catC$. 
\end{proof}

\begin{theorem} \label{Thm-CPMal'cev} 
  Regular Mal'cev categories are positively regular. 
  Conversely, a positively regular category $\catC$ is Mal'cev if and only if Corollary~\ref{CP[PosReg]} provides equivalences
  \[ 
    \CP(\RelC) \simeq \Rel(\cat{Gpd}(\catC)) \simeq \Rel(\cat{Cat}(\catC)).
  \] 
\end{theorem}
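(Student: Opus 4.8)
The plan is to prove the theorem in three movements, each reusing machinery already assembled above. First, for the forward direction — regular Mal'cev $\Rightarrow$ positively regular — I would verify clause (a) of Proposition~\ref{posRegProp}: every reflexive symmetric relation is positive. But in a Mal'cev category every reflexive relation is already an equivalence relation (by definition), and every equivalence relation $R$ on $A$ is positive: it is the kernel pair relation of its own quotient map $q\colon A \twoheadrightarrow A/R$, so $R = \sem{(a,b)\mid q(a)=q(b)} = q^\dag \circ q$ in $\RelC$ (the regular epi $q$ exists since $\catC$ is regular and has coequalizers of kernel pairs). This is the cleanest route; one checks in regular logic that $R(a,b)\iff(\exists x)\,(a=x\wedge q(a)=q(b)\wedge b=x)$, which is exactly $(q^\dag\circ q)(a,b)$. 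Hence (a) holds and $\catC$ is positively regular.

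Second, assuming $\catC$ positively regular and Mal'cev, I would assemble the chain of equivalences. By Corollary~\ref{CP[PosReg]}, $\CP(\RelC)$ is equivalent to the category of internal groupoids in $\catC$ and inverse-respecting relations between them. By Proposition~\ref{groupodclosedequalsMal'cev}(a)$\Leftrightarrow$(c), in the Mal'cev case the inverse-respecting subobjects of $A_1\times B_1$ are closed under composition, and by Lemma~\ref{subobinGpdC} these are exactly the morphisms of $\Rel(\cat{Gpd}(\catC))$ — here one needs that a relation $R\rightarrowtail A_1\times B_1$ respecting the groupoid structures on $A$ and $B$ is the same thing as a subobject of $(A_0,A_1)\times(B_0,B_1)$ in $\cat{Gpd}(\catC)$ closed under $\id{\dom(-)}$, inverses and composition, which Lemma~\ref{subobinGpdC} gives once we know $\cat{Gpd}(\catC)$ is regular (cited from \cite{Gran1999}). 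The identity-on-objects functor sending a groupoid to itself and an inverse-respecting relation to the corresponding subgroupoid is then full, faithful and essentially surjective, giving $\CP(\RelC)\simeq\Rel(\cat{Gpd}(\catC))$. Finally $\Rel(\cat{Gpd}(\catC))\simeq\Rel(\cat{Cat}(\catC))$ because the forgetful functor $\cat{Gpd}(\catC)\to\cat{Cat}(\catC)$ is an isomorphism of categories for $\catC$ Mal'cev \cite[Theorem~2.2]{A.CarboniM.C.Pedicchio}, and $\Rel(-)$ is functorial in (regular functors between) regular categories.

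Third, for the converse — positively regular plus ``Corollary~\ref{CP[PosReg]} yields these equivalences'' $\Rightarrow$ Mal'cev — I would argue contrapositively. If the equivalence $\CP(\RelC)\simeq\Rel(\cat{Gpd}(\catC))$ holds via the functor of Corollary~\ref{CP[PosReg]}, then in particular composition of inverse-respecting relations must land among inverse-respecting relations (since that is how composition works in $\Rel(\cat{Gpd}(\catC))$), i.e.\ clause (a) of Proposition~\ref{groupodclosedequalsMal'cev} holds, whence $\catC$ is Mal'cev by that proposition. One should also note that for $\catC$ merely positively regular but \emph{not} Mal'cev, $\Rel(\cat{Gpd}(\catC))$ need not even be well-defined (compare the remark that $\Rel(\cat{Gpd}(\cat{Set}))$ is ill-defined), so the hypothesis that Corollary~\ref{CP[PosReg]} ``provides equivalences'' is already tacitly asserting enough regularity.

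\textbf{Main obstacle.} The routine parts are genuinely routine given the lemmas; the delicate point is the precise comparison of hom-objects in step two: identifying a relation between internal \emph{groupoids} that ``respects inverses'' with a subobject in $\cat{Gpd}(\catC)$ of the product groupoid. This needs the observation (essentially Lemma~\ref{subobinGpdC} applied to $(A\times B)$) that a subobject of $A_1\times B_1$ closed under the pointwise groupoid operations and under $\id{\dom(-)}$ automatically determines an internal subgroupoid — and that closure under composition, which is \emph{not} part of merely ``respecting inverses'', is supplied exactly by the Mal'cev hypothesis through Proposition~\ref{groupodclosedequalsMal'cev}. Getting the bookkeeping of which closure conditions come from where, and checking that the comparison functor is well-defined on composites (not just on objects and individual morphisms), is where care is required; everything else follows by citation.
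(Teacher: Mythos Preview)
Your second and third movements are correct and match the paper's proof almost exactly: invoke Corollary~\ref{CP[PosReg]}, then use Lemma~\ref{subobinGpdC} together with Proposition~\ref{groupodclosedequalsMal'cev} to see that the forgetful functor from $\Rel(\cat{Gpd}(\catC))$ to the category of groupoids and inverse-respecting relations is an isomorphism precisely when $\catC$ is Mal'cev, and finish with $\cat{Gpd}(\catC)\cong\cat{Cat}(\catC)$.

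The first movement, however, has a genuine gap. You argue that a reflexive symmetric $R$ is an equivalence relation (fine, by Mal'cev), and then that $R$ is the kernel pair of its quotient $q\colon A\twoheadrightarrow A/R$, so $R=q^\dag\circ q$. But a \emph{regular} category is only assumed to have coequalizers of kernel pairs; an arbitrary equivalence relation need not have a coequalizer, and even if it does, it need not be the kernel pair of that coequalizer. That stronger property---effectiveness of equivalence relations---is exactly what distinguishes Barr-exact categories from merely regular ones, and it is not part of the hypotheses here. (Your displayed regular formula is also garbled: as written it forces $a=b$.)

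The paper sidesteps this entirely with a one-line argument you should use instead: for an equivalence relation $R$ one has $R^\dag=R$ (symmetry) and $R\circ R=R$ (transitivity gives $\leq$, reflexivity gives $\geq$ via $R(a,b)\Rightarrow R(a,a)\wedge R(a,b)$), hence $R=R^\dag\circ R$ is positive with witness $S=R$ itself. No quotients needed.
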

\noindent Note that $\Rel(\cat{Gpd}(\catC))$ and $\Rel(\cat{Cat}(\catC))$ are ill-defined for general positively regular $\catC$.
\begin{proof}
  For the first statement, let $R$ be any reflexive symmetric relation in a regular Mal'cev category. Then $R$ is an equivalence relation and hence satisfies $R = R^\dagger \circ R$, making it positive.




  Now let $\catC$ be a positively regular category. By Corollary~\ref{CP[PosReg]}, $\CP(\RelC)$ is equivalent to the category of groupoids and relations that respect inverses. By Lemma~\ref{subobinGpdC} and Proposition~\ref{groupodclosedequalsMal'cev}, the forgetful functor from $\Rel(\cat{Gpd}(\catC))$ to this category is (well-defined and) an isomorphism if and only if $\catC$ is Mal'cev, in which case $\cat{Gpd}(\catC)$ is isomorphic to $\cat{Cat}(\catC)$.
\end{proof}

\begin{example}
  We may read the previous theorem as saying that the CP construction, usually taken to add mixed states and processes to pure quantum theory, can be regarded as a process of \emph{categorification} in a broad class of categories of relations.
  The category $\cat{Cat}(\cat{Gp})$ of strict 2-groups is equivalent to the category $\cat{CrMod}$ of crossed modules, whence $\CP(\Rel(\cat{Gp})) \simeq \Rel(\cat{CrMod})$.
  Similarly, $\CP(\Rel(\cat{Vect}_k))$ is equivalent to the category of relations in 2-vector spaces.
\end{example}

Next we observe that, in any Mal'cev regular category, just as every internal category is in fact a groupoid, there is also redundancy in the definition of a dagger Frobenius structure.


\begin{theorem}\label{prop:frobeniusredundant}
  If $\catC$ is a regular Mal'cev category, any pair of morphisms $\tinymult \colon A \otimes A \to A$ , $\tinyunit \colon I \to A $ in $\RelC$ satisfying unitality form the composition and identities of an internal category in $\catC$, or equivalently a dagger special Frobenius structure in $\RelC$.
  \[ 
    \begin{pic}[scale=.4] 
      \node[dot] (d) {};
  	  \draw (d) to +(0,1);
      \draw (d) to[out=0,in=90] +(1,-1) to +(0,-1);
      \draw (d) to[out=180,in=90] +(-1,-1) node[dot] {};
    \end{pic}
    =
    \begin{pic}[scale=.4]
      \draw (0,0) to (0,3);
    \end{pic}
    =
    \begin{pic}[yscale=.4,xscale=-.4]
      \node[dot] (d) {};
      \draw (d) to +(0,1);
      \draw (d) to[out=0,in=90] +(1,-1) to +(0,-1);
      \draw (d) to[out=180,in=90] +(-1,-1) node[dot] {};
    \end{pic}
  \qquad
  \implies
  \qquad
\begin{pic}[scale=.4]
      \node[dot] (t) at (0,1) {};
      \node[dot] (b) at (1,0) {};
      \draw (t) to +(0,1);
      \draw (t) to[out=0,in=90] (b);
      \draw (t) to[out=180,in=90] (-1,0) to (-1,-1);
      \draw (b) to[out=180,in=90] (0,-1);
      \draw (b) to[out=0,in=90] (2,-1);
    \end{pic}
    =
    \begin{pic}[yscale=.4,xscale=-.4]
      \node[dot] (t) at (0,1) {};
      \node[dot] (b) at (1,0) {};
      \draw (t) to +(0,1);
      \draw (t) to[out=0,in=90] (b);
      \draw (t) to[out=180,in=90] (-1,0) to (-1,-1);
      \draw (b) to[out=180,in=90] (0,-1);
      \draw (b) to[out=0,in=90] (2,-1);
    \end{pic}  
    \qquad  
    \begin{pic}
      \node[dot] (t) at (0,.6) {};
      \node[dot] (b) at (0,0) {};
      \draw (b.south) to +(0,-0.3);
      \draw (t.north) to +(0,+0.3);
      \draw (b.east) to[out=30,in=-90] +(.15,.3);
      \draw (t.east) to[out=-30,in=90] +(.15,-.3);
      \draw (b.west) to[out=-210,in=-90] +(-.15,.3);
      \draw (t.west) to[out=210,in=90] +(-.15,-.3);
    \end{pic}
    =
    \begin{pic}
    	\draw(0,0) to (0,1.5);
    \end{pic}
  \qquad
    \begin{pic}[scale =.6]
      \draw (0,0) to (0,1) to[out=90,in=180] (.5,1.5) to (.5,2);
      \draw (.5,1.5) to[out=0,in=90] (1,1) to[out=-90,in=180] (1.5,.5) to (1.5,0);
      \draw (1.5,.5) to[out=0,in=-90] (2,1) to (2,2);
      \node[dot] at (.5,1.5) {};
      \node[dot] at (1.5,.5) {};
    \end{pic}
    =
    \begin{pic}[scale = 0.6, xscale=-1]
      \draw (0,0) to (0,1) to[out=90,in=180] (.5,1.5) to (.5,2);
      \draw (.5,1.5) to[out=0,in=90] (1,1) to[out=-90,in=180] (1.5,.5) to (1.5,0);
      \draw (1.5,.5) to[out=0,in=-90] (2,1) to (2,2);
      \node[dot] at (.5,1.5) {};
      \node[dot] at (1.5,.5) {};
    \end{pic}
  \]
\begin{proof}
  Since any internal category in $\catC$ is a groupoid, the second statement follows from Theorem~\ref{Thm-OperatorAreGroupoids}. 
Let $\tinymult = (M \colon A \times A \relto A)$ and $\tinyunit = (U \colon I \relto A)$. Then unitality corresponds to the formulae
\begin{align}
  (\exists x \in U) \  M(x,a,a') & \iff a = a' \label{units2_monproof} \tag{U1} & \\ 
  (\exists x \in U) \  M(a,x,a') & \iff a = a' \label{units1_monproof} \tag{U2} &
\end{align}

  
 We will first show that $M$ is single-valued as a relation. Suppose that both $M(a,b,c)$ and $M(a,b,d)$, then by \eqref{units2_monproof} $(\exists x \in U) \ M(c,x,c)$, and applying difunctionality we have $M(c,x,d)$ and so $c=d$. 
 In any regular category, such a relation is represented by a subobject of the form $(A \times A \leftarrowtail B \sxto{m} A)$ in $\catC$, where $B = \sem{ (a ,b) \in (A \times A) \mid  (\exists c \in A) \ M(a,b,c)}$; see \cite[Lemma~2.8]{Butz1998}. Write $(a,b)\defined$ for $B(a,b)$. Define the following relations in $\catC$:
  \begin{align}
    S & = \sem{ (a,x) \in A \times U \mid  (a,x)\defined } \colon A \relto U  \label{SourceRelation} \\
    T & = \sem{ (a,y) \in A \times U \mid  (y,a)\defined } \colon A \relto U \label{TargetRelation}
  \end{align}
  just as in~\cite[Definition~2]{Heunen2013}. Now $S$ is total, by \eqref{units1_monproof}, and single-valued since if $M(a,x,a)$ and $M(a,y,a)$ for $x$, $y$ in $U$, then since unitality gives $M(x,x,x)$ we have $M(x,y,x)$ by difunctionality, and so $x=y$. The same holds for $T$, and so $S$, $T$ correspond to morphisms $s$, $t$ in $\catC$ defining the data of an internal category
    \[\begin{pic}[font=\small,xscale=2.5]
    \node (0) at (-1,0) {$U$};
    \node (1) at (0,0) {$A$};
    \node (11) at (1,0) {$A \times_U A = B$};
    \draw[->] (11) to node[above=-1mm] {$m$} (1);
    \draw[>->] (0) to (1);
    \draw[->] ([yshift=-2mm]1.west) to node[below=-1mm] {$s$} ([yshift=-2mm]0.east);
    \draw[->] ([yshift=2mm]1.west) to node[above=-1mm] {$t$} ([yshift=2mm]0.east);
  \end{pic}\]
  where it remains to show that $B$ is in fact a pullback of $s$ and $t$, \textit{i.e.} $(a,b)\defined$ if and only if $s(a) = t(b)$. Suppose that $(a,b)\defined$ and $s(a) = x$. Then we have $(a,b)\defined$, $(a, x)\defined$, $(x,x)\defined$ and hence $(x,b)\defined$ by difunctionality. Conversely, $(x,b)\defined$, $(a,x)\defined$,  and $(x,x)\defined$ implies that $(a,b)\downarrow$.
Finally, in \cite[Theorem~2.2]{A.CarboniM.C.Pedicchio} it is shown that, in any Mal'cev category, all of the equations required of an internal category follow automatically whenever $m(a,s(a)) = a$, $m(t(a),a) = a$, and $s(x) = x = t(x)$ for all $a \in A$ and $x \in U$. 
\end{proof}
\end{theorem}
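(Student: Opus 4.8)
The plan is to establish the first assertion — that $(\tinymult, \tinyunit)$ is (the composition and identities of) an internal category in $\catC$ — directly; the equivalence with dagger special Frobenius structures then comes for free, since by Theorem~\ref{Thm-OperatorAreGroupoids} the latter are exactly internal groupoids in $\catC$, and in a Mal'cev category every internal category is already a groupoid (as recalled above, following~\cite[Theorem~2.2]{A.CarboniM.C.Pedicchio}). So I write $\tinymult = (M \colon A \times A \relto A)$ and $\tinyunit = (U \colon I \relto A)$, regard $U$ as a subobject of $A$, and unfold unitality into the two regular formulae $(\exists x \in U)\, M(x,a,a') \iff a = a'$ and $(\exists x \in U)\, M(a,x,a') \iff a = a'$. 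The only feature of $\catC$ I would use beyond regularity is that, being Mal'cev, every relation in it is difunctional (Proposition~\ref{groupodclosedequalsMal'cev}); the whole argument is then a matter of feeding suitably reshuffled copies of the ternary relation $M$ into difunctionality.

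First I would show $M$ is single-valued. Given $M(a,b,c)$ and $M(a,b,d)$, choose $x \in U$ with $M(c,x,c)$ from the second unitality formula; regarding $M$ as a relation $A \times A \relto A$ in the first two legs against the last, difunctionality applied to $M(c,x,c)$, $M(a,b,c)$, $M(a,b,d)$ gives $M(c,x,d)$, whence $c = d$ by unitality again. Single-valuedness lets me represent $M$ by a span $A \times A \hookleftarrow B \to A$ in $\catC$ with $B$ its domain of definition~\cite[Lemma~2.8]{Butz1998}; I write $(a,b)\defined$ for $(a,b) \in B$. Following~\cite{Heunen2013}, I then define source and target relations $S, T \colon A \relto U$ by $S(a,x) \iff (a,x)\defined$ and $T(a,y) \iff (y,a)\defined$; the second unitality formula makes $S$ total. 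Single-valuedness of $S$ I would get via a short bootstrap: totality of $S$ yields, for $x \in U$, some $w \in U$ with $M(x,w,x)$, and then the uniqueness half of the first unitality formula — using $x$ itself as a left unit witnessing $M(x,w,x)$ — forces $w = x$, i.e.\ $M(x,x,x)$ for every $x \in U$; feeding this together with $M(a,x,a)$, $M(a,y,a)$ into difunctionality (now in the first and last legs against the middle) produces $M(x,y,x)$, hence $x = y$. The mirror-image argument handles $T$. Thus $S, T$ are graphs of morphisms $s, t \colon A \to U$ in $\catC$.

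It then remains to check that $B$ is the pullback $A \times_U A$ of $s$ and $t$, i.e.\ $(a,b)\defined \iff s(a) = t(b)$: one direction feeds $(a,b)\defined$, $(a,s(a))\defined$ and $(s(a),s(a))\defined$ (the last being $M(s(a),s(a),s(a))$) into difunctionality of $B$ as an endorelation on $A$, the other feeds the mirror triple. Once this is in place, $(U, A, m, s, t)$ — with identity-assignment the inclusion $U \rightarrowtail A$ — carries all the data of an internal category, and unitality gives $m(a, s(a)) = a$, $m(t(a), a) = a$ and $s(x) = x = t(x)$ for $x \in U$ on the nose; by~\cite[Theorem~2.2]{A.CarboniM.C.Pedicchio} these three equations already imply, in a Mal'cev category, all of the associativity and identity axioms for an internal category. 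The step I expect to be the real obstacle is the reshuffling bookkeeping in the second paragraph: $M$ is ternary while difunctionality speaks of binary relations, so each use — single-valuedness of $M$, of $S$ and $T$, and the pullback identification — hinges on picking the right partition of the three legs and the right auxiliary $U$-witness, with the small lemma ``$M(x,x,x)$ for $x \in U$'', squeezed out of two-sided unitality, serving as the pivot that makes everything go through.
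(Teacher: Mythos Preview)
Your proposal is correct and follows essentially the same route as the paper's proof: use difunctionality to show $M$ is single-valued, define $S$ and $T$ and show they are total and single-valued via difunctionality and the pivot $M(x,x,x)$, identify $B$ with the pullback $A \times_U A$ by two more difunctionality applications, and then invoke \cite[Theorem~2.2]{A.CarboniM.C.Pedicchio} to obtain the remaining category axioms. The only difference is cosmetic --- you spell out the derivation of $M(x,x,x)$ and the leg-partitions more explicitly than the paper does.
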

Note that unitality is a property of $\tinymult$ alone, since we must have $ \tinyunit = \sem{a \in A \mid \tinymult(a,a,a)}$ .

\begin{example} Crossed modules are known to have numerous equivalent descriptions, as groups in $\cat{Cat}$ or $\cat{Gpd}$, and as categories or groupoids in $\cat{Gp}$ \cite[3.1]{Noohi2007} \cite{Baez2004a}. We can now add that a crossed module is just a unital morphism $\tinymult$ in $\Rel(\cat{Gp})$. Similarly, a  2-vector space is simply a unital morphism in $\Rel(\cat{Vect_K})$. 
\end{example}
\section{Quantum properties of $\RelC$}\label{sec:properties}

The category $\Rel$ of sets and relations is compact dagger, like $\cat{FHilb}$, but fails to satisfy many properties that are seen as typical to quantum theory. This section presents three such properties of $\cat{FHilb}$ that fail in $\Rel$ but hold in $\RelC$ whenever $\catC$ is a regular Mal'cev category. In this sense $\RelC$ is a `more quantum' model than $\Rel$ for regular Mal'cev categories $\catC$. 
We conclude by countering this with some non-quantum features of our main $\RelC$ examples not shared by $\Rel$.



\paragraph{Heisenberg uncertainty}

The Heisenberg uncertainty principle states that no information can be obtained from a quantum system without disturbing its state~\cite[Section~5.2]{heinosaariziman:quantum}. To model it categorically, we need an appropriate notion of quantum system. Following~\cite{Coecke2013i}, we will take \emph{quantum structures} to be special dagger Frobenius structure of the form $B=(A^* \otimes A, d
\begin{pic}[dotpic,scale=0.33]
		\node [style=none] (0) at (-1.25, -1.25) {};
		\node [style=none] (1) at (-5.75, -1.25) {};
		\node [style=none] (2) at (-2.75, 1.25) {};
		\node [style=none] (3) at (-4.25, 1.25) {};

		\node [style=none] (6) at (-4.25, -1.25) {};
		\node [style=none] (7) at (-2.75, -1.25) {};

		\draw [in=-90, out=90] (0.center) to (2.center);
		\draw [in=90, out=-90] (3.center) to (1.center);
		\draw [in=90, out=90, looseness=2.00] (6.center) to (7.center);
\end{pic}
,  d^{-1}
\begin{pic}[dotpic,scale=0.33]
		\node [style=none] (4) at (3.75, 0.25) {};
		\node [style=none] (5) at (5.25, 0.25) {};
		\draw [in=-90, out=-90, looseness=2.00] (4.center) to (5.center);	
\end{pic}
)$, where the scalar $d\colon I \to I$ is invertible. 
In $\cat{FHilb}$, these correspond to the C*-algebras $\mathbb{M}_n$ of $n$-by-$n$ matrices. 
In $\RelC$ for regular $\catC$, these correspond to indiscrete groupoids on $A$ (see Example~\ref{ex-indiscgpd}).
We model the principle contrapositively as follows, abstracting the fact that POVMs on a Hilbert space $A$ with $n$ outcomes are precisely morphisms $B \to B \otimes C$ in $\CPs[\cat{C}]$, where $C$ is a commutative special dagger Frobenius structure on $\mathbb{C}^n$. 

\begin{definition}
  A compact dagger category satisfies the \emph{Heisenberg uncertainty principle} when the following holds for any quantum structure $(B,\tinymult[white_dot],\tinyunit[white_dot])$, any commutative special dagger Frobenius structure $(C,\tinymult,\tinyunit)$, and any completely positive morphism $M \colon B \to B \otimes C$:
  \begin{equation}\label{heisenberg}
    \begin{pic}
      \node[morphism](M) at (0,0){$M$};
	  \draw([xshift=1mm]M.north east) to +(0,0.4) node[above,dot](d){};
	  \draw([xshift=-1mm]M.north west) to +(0,0.5) node[above]{$B$};
	  \draw(M.south) to (0,-0.5) node[below]{$B$};
	\end{pic}
    \;= 
	\begin{pic}
	  \draw (0,-0.5) node[below] {$B$} to (0,0.73) node[above] {$B$};
	\end{pic}
	\qquad	\Longrightarrow \qquad
	(\exists \psi \colon I \to C)\quad
	\begin{pic}
	  \node[morphism](M) at (0,0){$M$};
	  \draw([xshift=-1mm]M.north west) to +(0,0.4) node[above,white_dot](d){};
	  \draw([xshift=1mm]M.north east) to +(0,0.5) node[above]{$C$};
	  \draw(M.south) to (0,-0.69) node[below]{$B$};
	\end{pic}
	\;=
	\begin{pic}
	  \node[white_dot](d) at (0,1){};
	  \node[morphism](s) at (0,1.5){$\psi$};
	  \draw (s.north) to (0,2) node[above]{$C$};
	  \draw (d.south) to +(0,-0.3) node[below] {$B$};
	\end{pic}
  \end{equation}
\end{definition}

The category $\FHilb$ satisfies the Heisenberg uncertainty principle~\cite[Section~6.3]{maassen:probability}.

\begin{lemma}
  The category $\Rel$ does not satisfy the Heisenberg uncertainty principle.
\end{lemma}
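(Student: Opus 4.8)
The plan is to exhibit an explicit quantum structure $B$ in $\Rel$, a commutative special dagger Frobenius structure $C$, and a completely positive morphism $M \colon B \to B \otimes C$ that satisfies the left-hand side of~\eqref{heisenberg} (the marginal on $B$ is the identity, i.e.\ $M$ is non-disturbing) but violates the right-hand side (there is no state $\psi \colon I \to C$ through which the $C$-marginal of $M$ factors, i.e.\ $M$ does extract genuine information). Concretely, I would take $B$ to be the quantum structure on a two-element set, which by the discussion preceding the lemma corresponds to the indiscrete groupoid on $A = \{0,1\}$, so $B = A \times A = A^* \otimes A$ with the pair-of-pants multiplication; and take $C$ to be the commutative Frobenius structure on the two-element set $\{0,1\}$ given by the group $\mathbb{Z}_2$ (or, alternatively, the discrete groupoid / copying structure on $\{0,1\}$). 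The morphism $M$ should be the relation that copies the ``which-point'' information of the indiscrete groupoid: something like $M \colon (a,b) \mapsto \big((a,b), \delta(a,b)\big)$ where $\delta(a,b) \in \{0,1\}$ records whether $a = b$, or the variant that reads off one coordinate.

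The key steps, in order, are: (1) write $B$, $C$, $M$ explicitly as relations and record the internal-groupoid data; (2) check $M$ is completely positive in $\CP(\Rel)$ — here I would invoke Corollary~\ref{CP[PosReg]}, since $\cat{Set}$ is positively regular, so it suffices to check that $M$, viewed as a relation between groupoids, respects inverses in the sense of~\eqref{CPRelCondn}; for the copying-type relation this is a direct verification since the inverse in the indiscrete groupoid is the swap and the $\mathbb{Z}_2$/copying structure interacts correctly; (3) verify the hypothesis of~\eqref{heisenberg}, namely that discarding the $C$-output of $M$ leaves $\id{B}$ — this amounts to checking that for every $(a,b)$ there is exactly the right set of $C$-values paired with it under $M$, collapsing to the identity relation on $B$; (4) verify the conclusion fails: discarding the $B$-output of $M$ yields a relation $B \relto C$ which, for the conclusion to hold, would have to equal $\psi \circ (\text{discard}_B)$ for some relation $\psi \colon I \relto C$, i.e.\ would have to be a ``constant'' relation independent of the input in $B$; and then show the relation we obtain is not of this form because different elements of $B$ get related to different subsets of $C$.

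The main obstacle I expect is step (3)–(4) balancing act: the morphism must simultaneously be non-disturbing on $B$ (so that the premise of the implication holds and the counterexample is non-vacuous) and yet genuinely informative about $C$ (so the conclusion fails). In $\Rel$, relations are quite flabby — discarding an output tends to produce large relations — so I would need to choose $M$ carefully, likely the graph of a function or a near-function, so that the $B$-marginal is exactly the diagonal rather than something strictly larger. A clean way to make step~(4) robust is to arrange that the $C$-marginal relation distinguishes at least two inputs, e.g.\ the identity-morphism $\id{x}$ of the indiscrete groupoid relates only to $0 \in C$ while some non-identity morphism relates only to $1 \in C$, which immediately rules out factorisation through a state. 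I would also double-check that the chosen $C$ is genuinely commutative and special as a Frobenius structure, which for the group $\mathbb{Z}_2$ or the copying structure on a finite set is standard. Writing up the pair-of-pants and $\mathbb{Z}_2$ relations and mechanically running composition~\eqref{eq:pullbackcomposition} through each of the four conditions then completes the argument.
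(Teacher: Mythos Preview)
Your setup matches the paper's exactly---$B$ the indiscrete groupoid on a two-element set $\{x,y\}$, $C=\mathbb{Z}_2$, complete positivity checked via the inverse-respecting condition~\eqref{CPRelCondn}---but neither of your candidate relations $M$ works, and the ``copy the which-point information'' heuristic points in the wrong direction. Take the graph of $(a,b)\mapsto\big((a,b),\delta(a,b)\big)$ with $C=\mathbb{Z}_2$: the counit of $\mathbb{Z}_2$ picks out only the identity $0$, so the $B$-marginal in~\eqref{heisenberg} is $\{\,(b,b)\mid \delta(b)=0\,\}=\{\,((a,a),(a,a))\,\}$, strictly smaller than $\id{B}$; the premise already fails. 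Taking $C$ to be the copying structure instead, the premise does hold, but the $B$-counit restricts to identities $(a,a)$, on which $\delta$ is constantly $0$, so the $C$-marginal factors through $\psi=\{0\}$ and the conclusion holds too. The coordinate-reading variant is not even inverse-respecting, since $(a,b)^{-1}=(b,a)$ in $B$ swaps the coordinate being read. In fact for \emph{any} function-graph $M=(\id{B},h)$ the premise with $C=\mathbb{Z}_2$ forces $h\equiv 0$, and with copying $C$ the identity clause of~\eqref{CPRelCondn} forces $h(a,b)=h(a,a)$, hence by the inverse clause $h$ is constant; either way the conclusion of~\eqref{heisenberg} holds.

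The paper's $M$ is the non-functional, \emph{asymmetric} relation
\[
  M \;=\; \{\,(b,b,0)\mid b\in B\,\}\;\cup\;\{\,((x,x),(x,x),1)\,\}
\]
with $C=\mathbb{Z}_2$. The first component alone makes the premise of~\eqref{heisenberg} hold; the single extra tuple sits over just one of the two identity objects of $B$, so after applying the $B$-counit one finds $(x,x)$ related to $\{0,1\}\subseteq C$ but $(y,y)$ only to $\{0\}$, which rules out factorisation through any $\psi$. Inverse-respecting is immediate since $(x,x)$ and $1\in\mathbb{Z}_2$ are both self-inverse. The asymmetry between $x$ and $y$ is precisely what a symmetric which-point detector cannot supply.
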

\begin{proof}
 (See also \cite{msc}) Let $B$ be the indiscrete groupoid on the two-element set $\left\{{x,y}\right\}$, and let $C$ be the group $\mathbb{Z}_2=\{0,1\}$, regarded as a groupoid. The inverse-respecting relation 
  \[
    M = \{ (b,b,0))\mid b \in B \} \cup \{((x,x),(x,x),1))\} \subseteq B \times (B \times C).
  \]
  satisfies $M((x,x),(x,x),1)$ but not $M((y,y),(y,y),1)$.

\end{proof}

\begin{proposition} 
  Let $\catC$ be a regular Mal'cev category. Then $\RelC$ satisfies the Heisenberg uncertainty principle for any quantum structure $(B, \tinymult[white_dot], \tinyunit[white_dot])$ and \emph{any} special dagger Frobenius structure $(C, \tinymult, \tinyunit)$.
\end{proposition}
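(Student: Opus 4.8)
The plan is to use that a regular Mal'cev category is positively regular, so that completely positive morphisms are exactly the inverse-respecting relations, and then to compress all the information in $M$ and in the no-disturbance hypothesis into a single relation between small objects, on which the Mal'cev property bites. Throughout I regard the quantum structure $B$ as the indiscrete groupoid on an object $A$ (Example~\ref{ex-indiscgpd}), the Frobenius structure $C$ as an internal groupoid (Theorem~\ref{Thm-OperatorAreGroupoids}), and $B\otimes C$ as their product groupoid. By Theorem~\ref{Thm-CPMal'cev} the category $\catC$ is positively regular, so by Corollary~\ref{CP[PosReg]} the morphism $M\colon B\relto B\otimes C$ respects inverses, that is, it satisfies~\eqref{CPRelCondn} with respect to these groupoids; reading the hypothesis of~\eqref{heisenberg} in regular logic yields: (i) for each arrow $b$ of $B$ there is an identity arrow $c$ of $C$ with $M(b,b,c)$; and (ii) $M(b,b',c)$ with $c$ an identity implies $b=b'$.

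First I would collapse $M$ to a relation $B\relto C$. Applying~\eqref{CPRelCondn} to $M(b,b',c)$ gives $M(\id{\text{dom}(b)},\id{\text{dom}(b')},\id{\text{dom}(c)})$; as $\id{\text{dom}(c)}$ is an identity, (ii) forces $\id{\text{dom}(b)}=\id{\text{dom}(b')}$, hence (units are monic) $\text{dom}(b)=\text{dom}(b')$, and applying the same argument to $M(b^{-1},b'^{-1},c^{-1})$ forces $\text{cod}(b)=\text{cod}(b')$. Since an arrow of an indiscrete groupoid is determined by its domain and codomain, $M(b,b',c)\Rightarrow b=b'$. So $M$ is determined by the relation $\widehat{M}:=\sem{(b,c)\mid M(b,b,c)}\colon B\relto C$, which by~\eqref{CPRelCondn} is still closed under inverses and under $\id{\text{dom}(-)}$ in the product groupoid, and for which (i) reads: every arrow of $B$ is $\widehat{M}$-related to some identity of $C$.

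Next I would reduce the conclusion of~\eqref{heisenberg}. Unfolding its two sides (the counit of $B$ picks out identity arrows of $B$, and $M(b,b',c)\Rightarrow b=b'$), the left-hand morphism $B\relto C$ relates $(b,c)$ exactly when $b$ is an identity and $\widehat{M}(b,c)$, and the right-hand one relates $(b,c)$ exactly when $b$ is an identity and $\psi(c)$; so it suffices to exhibit $\psi\colon I\relto C$ with $\widehat{M}((a,a),c)\iff\psi(c)$ for all $a\in A$ and $c\in C_1$. The only candidate is $\psi:=\sem{c\in C_1\mid (\exists a\in A)\,\widehat{M}((a,a),c)}$ (a legitimate $\CP$-state, since it inherits inverse-respect from $\widehat{M}$), and the statement reduces to the independence claim $\widehat{M}((a',a'),c)\Rightarrow\widehat{M}((a,a),c)$ for all $a,a'\in A$, $c\in C_1$. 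To prove it, fix such data with $\widehat{M}((a',a'),c)$; by (i) applied to the arrow $(a,a')$ of $B$ there is an object $z$ of $C$ with $\widehat{M}((a,a'),\id{z})$, and closing $\widehat{M}$ under inverses and under $\id{\text{dom}(-)}$ delivers both $\widehat{M}((a,a),\id{z})$ and $\widehat{M}((a',a'),\id{z})$. Now every relation in the Mal'cev category $\catC$ is difunctional (the endomorphism case is Proposition~\ref{groupodclosedequalsMal'cev}; for the heteromorphic relation $\widehat{M}$ one may instead apply that result to the reflexive symmetric relation $(b_1,c_1)\sim(b_2,c_2):\Leftrightarrow\widehat{M}(b_1,c_2)\wedge\widehat{M}(b_2,c_1)$ on the object $\widehat{M}$, which is thus an equivalence relation), and difunctionality of $\widehat{M}$ then gives $\widehat{M}((a,a),c)$ from $\widehat{M}((a,a),\id{z})$, $\widehat{M}((a',a'),\id{z})$ and $\widehat{M}((a',a'),c)$.

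The hard part, and the only place where the Mal'cev hypothesis is used, will be this last step. No-disturbance supplies, for each arrow $b$ of $B$, merely \emph{some} identity of $C$ related to $b$, so one cannot transport $\widehat{M}((a',a'),c)$ to the base point $a$ by composition inside $\widehat{M}$ --- the $C$-components would have to be matched, and in general are not. Difunctionality is exactly what bridges this gap, which is why the principle fails in $\Rel$ (as $\cat{Set}$ is not Mal'cev) and holds here. It is also worth noting that the reduction of the first step is a genuine prerequisite: it is only after $M$ has been collapsed to $\widehat{M}$ that $(a,a)$ and $(a',a')$ become distinct points a difunctionality argument can link; performed on $M$ itself, the analogous step is vacuous.
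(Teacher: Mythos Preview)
Your proof is correct and follows essentially the same strategy as the paper's: both reduce $M$ to an auxiliary relation (your $\widehat M$ restricted to identities of $B$ is precisely the paper's $T(a,x):=M((a,a),(a,a),x)$), manufacture a common identity witness in $C$ for any pair $a,a'\in A$ from the no-disturbance hypothesis together with closure under $\id{\text{dom}}$ and $\id{\text{cod}}$, and then conclude by difunctionality. Your preliminary collapse $M(b,b',c)\Rightarrow b=b'$ is slightly more than the paper bothers to prove (it jumps straight to $T$, so your closing remark that the collapse is a ``genuine prerequisite'' is an overclaim), and you are more careful than the paper in justifying difunctionality for a heteromorphic relation; but the core argument is the same.
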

\begin{proof}
  Regard $M$ as a relation $M \rightarrowtail (A \times A) \times (A \times A) \times C$, where $B$ is the indiscrete groupoid on $A$. Suppose $M$ satisfies the left-hand side of~\eqref{heisenberg}, so that
  \begin{equation} \label{no-disturbance}
    (\exists x \in C)\; M((a,a'), (b, b'), \id{\text{dom}(x)}) \;\iff\; (a, a') = (b, b').
  \end{equation}
  We wish to show $(\exists b \in A) \ M( (a,a'), (b,b), x) \iff a=a' \wedge \psi(x)$ for some $\psi \rightarrowtail C$. Define the new relation $T = \sem{ (a,x) \in A \times C \mid M((a,a),(a,a),x)}$. Then by~\eqref{no-disturbance} and closure of $M$ under $\id{\text{dom}}$ and $\id{\text{cod}}$, we have $(\exists b \in A) \ M( (a,a'), (b,b), x)$ if and only if $(a=a') \wedge T(a,x)$. It suffices to show that $T(a,x)$ holds precisely when $(\exists a' \in A) \  T(a',x)$, because we may then take $\psi = \sem{ x \in C \mid (\exists a\in A) \ T(a,x)}$.

  Once more using~\eqref{no-disturbance} and closure of $M$ under $\id{\text{dom}}$ and $\id{\text{cod}}$, observe that $(\forall a, a'\in A) \  (\exists x \in C) $ such that $T(a,x)\wedge T(a',x)$. Hence if $T(a',x)$ holds then $(\exists y \in C) \ T(a,y) \wedge T(a',y) \wedge T(a',x)$ and so by difunctionality $T(a,x)$ holds, as desired.  
\end{proof}

\paragraph{Broadcasting}

While statistical mechanics includes its own version of the no-cloning theorem, it has instead been argued that one of the unique features of classical systems is in their capacity to be \emph{broadcast}~\cite{Barnum2007}. We now capture this property categorically, following~\cite{Coecke2013i}.

\begin{definition} 
  Let $\cat{C}$ be a compact dagger category. A \emph{broadcasting map} for an object $(A, \tinymult)$ of $\CP(\catC)$ is a morphism $A \sxto{B} A \otimes A$ in $\CP(\catC)$ satisfying:
  \[ \label{broadcasting}
    \begin{pic}
      \node[morphism](M) at (0,0){$B$};
	  \draw([xshift=1mm]M.north east) to + (0,0.3) node[above,dot]{}; 
	  \draw([xshift=-1mm]M.north west) to +(0,0.5) node[above]{$A$};
	  \draw(M.south) to (0,-0.5) node[below]{$A$};
	\end{pic}
	\quad = \;
	\begin{pic}
	  \draw (0,0) node[below]{$A$} to (0,1.29) node[above]{$A$};
	\end{pic}
	\; = \quad
	\begin{pic}
	  \node[morphism](M) at (0,0){$B$};
	  \draw([xshift=-1mm]M.north west) to + (0,0.3) node[above,dot]{}; 
	  \draw([xshift=1mm]M.north east) to +(0,0.5) node[above]{$A$};
	  \draw(M.south) to (0,-0.5) node[below]{$A$};
	\end{pic}
  \]
  Any commutative dagger Frobenius structure in $\catC$ has a broadcasting map $\tinymultflip$, which can be shown to be completely positive. We say $\catC$ satisfies the \emph{no-broadcasting principle} if the converse holds.
\end{definition}

The category $\cat{FHilb}$ satisfies the no-broadcasting principle~\cite{Barnum1996}.

\begin{lemma}\cite{heunenkissinger:cbh}
  The category $\Rel$ does not satisfy the no-broadcasting principle.
\end{lemma}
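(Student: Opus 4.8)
The plan is to produce, inside the single model $\Rel$, one special dagger Frobenius structure that is \emph{not} commutative yet admits a completely positive broadcasting map; this refutes the no-broadcasting principle, which is precisely the converse of the fact that every commutative dagger Frobenius structure carries a completely positive broadcasting map. Since special dagger Frobenius structures in $\Rel$ are groupoids by Theorem~\ref{Thm-OperatorAreGroupoids}, the natural candidates are one-object groupoids, i.e.\ groups. So I would fix any non-abelian group $G$ --- concretely $G = S_3$ --- and take $(G,\tinymult,\tinyunit)$ to be the induced Frobenius structure, with $\tinymult = (m_G \colon G \times G \relto G)$ the group multiplication and $\tinyunit$ the relation picking out the unit $e \in G$. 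A one-line check shows this is not commutative: composing $\tinymult$ after the swap $\sigma$ on $G \otimes G$ yields the relation $\{((g,h),hg) \mid g,h \in G\}$, which differs from $\tinymult = \{((g,h),gh) \mid g,h\in G\}$ exactly because $G$ is non-abelian.

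Next I would exhibit the broadcasting map
\[
  B \;=\; \{\, (a,(a,e)) \mid a \in G \,\} \;\cup\; \{\, (a,(e,a)) \mid a \in G \,\} \;\subseteq\; G \times (G \times G),
\]
regarded as a relation $G \relto G \otimes G$, and verify two things. First, $B$ is completely positive: since $\cat{Set}$ is positively regular, Corollary~\ref{CP[PosReg]} lets me replace ``completely positive'' by ``respects inverses'' in the sense of~\eqref{CPRelCondn}, and because every arrow of a one-object groupoid shares the single identity $e$, this reduces to the immediate observations that $(a,(b,c)) \in B$ implies $(a^{-1},(b^{-1},c^{-1})) \in B$, and that $(e,(e,e)) \in B$. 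Second, $B$ satisfies the broadcasting equations: the counit $\epsilon \colon G \relto I$ relates $g$ to the unique point of $I$ iff $g = e$, so $(\id{G} \otimes \epsilon)\circ B$ relates $a$ to $b$ exactly when $(a,(b,e)) \in B$, which holds iff $b = a$; hence $(\id{G} \otimes \epsilon)\circ B = \id{G}$, and by the symmetry of $B$ under exchanging its two output factors also $(\epsilon \otimes \id{G})\circ B = \id{G}$. Thus $B$ is a completely positive broadcasting map for the non-commutative structure $(G,\tinymult)$, and $\Rel$ fails the no-broadcasting principle.

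The one step that warrants care is the evaluation of the two composites in $\Rel$: one must keep track of which output wire the counit is plugged into and confirm that the two halves of $B$ do not interfere --- the half $\{(a,(e,a)) \mid a \in G\}$ contributes to $(\id{G} \otimes \epsilon)\circ B$ only in the degenerate case $a = e$, which is already covered by the half $\{(a,(a,e)) \mid a \in G\}$. Everything else is bookkeeping; if a non-group example were preferred, the pair groupoid on a two-element set (the indiscrete groupoid of Example~\ref{ex-indiscgpd} on $\{0,1\}$) works equally well, at the cost of a slightly longer case analysis.
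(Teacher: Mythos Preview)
Your proof is correct and essentially identical to the paper's: both take a non-abelian group $G$ viewed as a one-object groupoid and use the relation $B = \{(a,(a,e)),(a,(e,a)) \mid a \in G\}$ as a broadcasting map that respects inverses. The paper's write-up is terser, but the construction and verifications are the same.
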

\begin{proof}
  Let $G$ be a nonabelian group, and regard it as a groupoid $(G_0,G_1)$.
  Define a morphism $B \colon G_1 \times G_1 \to G_1$ in $\Rel$ by 
  $
    B = \left\{ (g,\id{\text{dom}(g)}, g) \mid g \in G_1 \right\} \cup \left\{ (g, g, \id{\text{dom}(g)}) \mid g \in G_1 \right\}
  $.
  This relation respects inverses, so is a morphism of $\CP(\Rel)$. It is also a broadcasting map since $(\exists x \in G_0) \ B(g,\id{x},h) \iff (\exists x \in G_0) \ B(g,h,\id{x}) \iff g=h$ for $g,h \in G_1$. 
\end{proof}

\begin{proposition} 
  For $\cat{C}$ regular Mal'cev, $\RelC$ satisfies the no-broadcasting principle.
\begin{proof}
  Suppose $B \rightarrowtail A \times A \times A$ in $\CP(\RelC)$ is broadcasting. Then:
  \begin{equation} \label{broadcasting-eq}
    (\exists x \in A) \ B(a, \id{\text{dom}(x)}, a') \iff a = a' \iff (\exists x \in A) \ B(a, a', \id{\text{dom}(x)})
  \end{equation}
  Use closure of $B$ under identities and inverses to show that, in any regular category $\catC$, \eqref{broadcasting-eq} implies ${(\forall a \in A)}\ \text{dom}(a) = \text{cod}(a)$. Now, arguing in the internal logic of $\catC$,  let $a,a' \in A$ be such that $\text{dom}(a) = \text{dom}(a')$, so that $B(a,\id{\text{dom}(a)},a)$ and $B(a',a',\id{\text{dom(a)}})$. Since $\catC$ is Mal'cev, $B$ is closed under composition in $A$ by Proposition~\ref{groupodclosedequalsMal'cev}, and so $B(a \circ a' \circ a^{-1}, a', \id{\text{dom}(a)})$. Hence $a \circ a' = a' \circ a$ by~\eqref{broadcasting-eq}. 
\end{proof}
\end{proposition}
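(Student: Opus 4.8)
The plan is to rewrite the two broadcasting equations in the internal logic of $\catC$, extract every structural consequence they carry about $B$ and $A$, and then invoke the Mal'cev closure property of Proposition~\ref{groupodclosedequalsMal'cev} to force $\tinymult$ to be commutative. First I would unpack the hypotheses: by Theorem~\ref{Thm-OperatorAreGroupoids} the object $(A,\tinymult)$ of $\CP(\RelC)$ is an internal groupoid, with $\tinyunit^\dag$ the subobject of identity arrows, so a broadcasting map $B\rightarrowtail A\times A\times A$ is precisely a relation satisfying
\[
  (\exists x)\; B(a,\id{\text{dom}(x)},a') \iff a=a' \iff (\exists x)\; B(a,a',\id{\text{dom}(x)})
\]
which, being a morphism of $\CP(\RelC)$, respects inverses in the sense of~\eqref{CPRelCondn}, i.e.\ is a subobject of the internal groupoid $A\times A\times A$ closed under $\id{\text{dom}(-)}$ and inverses. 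Since $\catC$ is Mal'cev, Proposition~\ref{groupodclosedequalsMal'cev} additionally makes $B$ closed under composition in $A\times A\times A$.

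The next step is a purely logical squeeze, valid over any regular $\catC$: pin down the identity arrows in the equivalences and deduce that every arrow of $A$ is an endomorphism. Setting $a=a'$ gives $B(a,i,a)$ for some identity $i$; closure under inverses turns this into $B(\id{\text{dom}(a)},i,\id{\text{dom}(a)})$, and the second equivalence --- with $\id{\text{dom}(a)}$ witnessing the last slot --- forces $i=\id{\text{dom}(a)}$. Hence $B(a,\id{\text{dom}(a)},a)$ and, symmetrically, $B(a,a,\id{\text{dom}(a)})$ for all $a$. Applying inverse-respecting to $B(a,\id{\text{dom}(a)},a)$ gives $B(a^{-1},\id{\text{dom}(a)},a^{-1})$; rerunning the forcing argument on $a^{-1}$ then equates the middle entry $\id{\text{dom}(a)}$ with $\id{\text{dom}(a^{-1})}$, so $\text{dom}(a)=\text{cod}(a)$. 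Thus $A$ is a disjoint union of groups.

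For commutativity I would then fix $a,a'$ with $\text{dom}(a)=\text{dom}(a')=x$, so that $B(a,\id x,a)$, $B(a',a',\id x)$, and (closure under inverses) $B(a^{-1},\id x,a^{-1})$ all hold. Composing these three arrows of the product groupoid $A\times A\times A$ and using that $B$ is closed under composition yields $B(a\circ a'\circ a^{-1},\,a',\,\id x)$, and then the second broadcasting equivalence, with $\id x$ witnessing the last slot, forces $a\circ a'\circ a^{-1}=a'$, i.e.\ $a\circ a'=a'\circ a$. Every fibre of $A$ is therefore an abelian group, which makes $(A,\tinymult)$ a commutative dagger Frobenius structure --- exactly the conclusion required by the no-broadcasting principle.

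I expect the real effort to sit in the middle step: upgrading ``$B(a,i,a)$ for some identity'' first to ``$i=\id{\text{dom}(a)}$'' and then to $\text{dom}=\text{cod}$ needs both broadcasting equations and inverse-respecting used together, and each inference must be checked to be derivable in regular logic so that it lifts from $\Rel$ to $\RelC$. The Mal'cev composition trick in the last step is the conceptual heart but, once $B$ is known to be closed under composition, only a one-line calculation; that this closure genuinely fails in $\Rel$, which is not Mal'cev, is precisely why the nonabelian-group counterexample above obstructs no-broadcasting there.
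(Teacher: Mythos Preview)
Your proposal is correct and follows essentially the same route as the paper: translate the broadcasting equations into regular logic, use inverse-respecting to force $\text{dom}(a)=\text{cod}(a)$, and then compose the triples $(a,\id x,a)$, $(a',a',\id x)$, $(a^{-1},\id x,a^{-1})$ via Proposition~\ref{groupodclosedequalsMal'cev} to obtain $a\circ a'=a'\circ a$. The only difference is that you spell out the ``middle step'' (pinning down the identity in $B(a,i,a)$ and deducing $\text{dom}=\text{cod}$) in full, whereas the paper leaves it as a one-line instruction; your expansion of that step is sound.
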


\paragraph{Rank}
 
The third property we discuss concerns the linear structure of quantum theory. Due to this structure, morphisms in $\cat{FHilb}$ come with a notion of rank. Rays, morphisms of rank at most one, are reflected in the graphical calculus by disconnectedness. 

\begin{definition} \label{def:rays}
  We say a monoidal dagger category satisfies the \emph{bottleneck principle} if morphisms $R$ factor through $I$ whenever $R^\dag \circ R$ does so:
  \[
	(\exists \psi, \phi \colon I \to A)\quad
    \begin{pic}
      \node[morphism,hflip] (g) at (0,1.75) {$R\vphantom{f}$};
      \node[morphism] (f) at (0,1) {$R$};
      \draw (f.south) to +(0,-.3) node[below] {$A$};
      \draw(f.north) to (g.south);
      \draw (g.north) to +(0,.3) node[above] {$A$};
    \end{pic} 
    \quad = \quad
    \begin{pic}
      \node[morphism,hflip](b) at (0, 1){$\phi$};
      \node[morphism](t) at (0, 1.75){$\psi$};
      \draw (b.south) to +(0,-0.3) node[below] {$A$};
      \draw (t.north) to +(0,0.3) node[above] {$A$};
    \end{pic}
    \quad \implies \quad
	(\exists \phi \colon I \to A, \psi \colon I \to B)\quad
    \begin{pic}
      \node[morphism] (r) at (0,1) {$R$};
      \draw (r.north) to +(0,0.66) node[above]{$B$};
      \draw (r.south) to +(0,-0.66) node[below]{$A$};
    \end{pic} 
    \quad = \quad
    \begin{pic}
	  \node[morphism,hflip](b) at (0, 1){$\phi$};
	  \node[morphism](t) at (0, 1.75){$\psi$};
	  \draw (b.south) to +(0,-0.3) node[below] {$A$};
	  \draw (t.north) to +(0,0.3) node[above] {$B$};
    \end{pic}
  \]
\end{definition}

The category $\cat{FHilb}$ clearly satisfies the bottleneck principle, because $\mathrm{rank}(f^\dag \circ f) = \mathrm{rank}(f)$.
\begin{lemma} 
 The category $\cat{Rel}$ does not satisfy the bottleneck principle.
\begin{proof}
  Let $B = \left\{ {0,1}\right\}$, and consider the relation $R \colon B \relto B$ given by $R = \left\{ (0,0), (0,1), (1,1) \right\}$. Now $R^\dag \circ R$ splits as the product relation $B \times B$, but $R$ cannot be written as a product of subsets of $B$.
\end{proof}
\end{lemma}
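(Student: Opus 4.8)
The plan is to verify the counterexample named in the statement by first translating ``factors through $I$'' into concrete terms in $\Rel$, and then performing two finite checks. Since the monoidal unit $I$ is a one-element set, a morphism $I \relto A$ is just a subset $\phi \subseteq A$, its dagger $\phi^\dag \colon A \relto I$ is the reverse relation, and the composite $\psi \circ \phi^\dag \colon A \relto B$ of subsets $\phi \subseteq A$, $\psi \subseteq B$ works out to the product relation $\phi \times \psi = \{(a,b) \mid a \in \phi,\, b \in \psi\}$. Hence a relation factors through $I$ exactly when it is a \emph{product of subsets}, and the bottleneck principle for $\Rel$ reads: whenever $R^\dag \circ R$ is a product of subsets, so is $R$.

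First I would take $B = \{0,1\}$ and $R = \{(0,0),(0,1),(1,1)\} \colon B \relto B$, and compute the positive relation $R^\dag \circ R$. By the definition of composition together with $R^\dag(b,c) \iff R(c,b)$, we have $(R^\dag \circ R)(a,c) \iff (\exists b)\, R(a,b) \wedge R(c,b)$. Reading off the rows $R(0,-) = \{0,1\}$ and $R(1,-) = \{1\}$, any two rows intersect in the common witness $b = 1$, so $R^\dag \circ R = B \times B$. This is the product of $B$ with itself, so it factors through $I$ and the hypothesis of the bottleneck principle is met.

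Then I would show the conclusion fails by arguing that $R$ is not a product of subsets. If $R = S \times T$ for $S, T \subseteq B$, then $(0,0) \in R$ gives $0 \in S \cap T$ and $(1,1) \in R$ gives $1 \in S \cap T$, forcing $S = T = B$ and hence $R = B \times B$; but $(1,0) \notin R$, a contradiction. So no such factorization exists, while $R^\dag \circ R$ does factor through $I$, and therefore $\Rel$ violates the bottleneck principle.

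The only point requiring care is the opening dictionary: identifying ``factors through $I$'' with ``is a product of subsets.'' Once that is in place there is no real obstacle, since both the computation of $R^\dag \circ R$ and the non-rectangularity of $R$ are elementary finite verifications; the argument is essentially a matter of presenting them cleanly.
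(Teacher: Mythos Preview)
Your proof is correct and follows exactly the same approach as the paper's own proof: the same counterexample $R = \{(0,0),(0,1),(1,1)\}$ on $B=\{0,1\}$, the same observation that $R^\dag \circ R = B \times B$, and the same conclusion that $R$ is not a product of subsets. You have simply spelled out the dictionary between ``factors through $I$'' and ``product of subsets'' and filled in the finite checks that the paper leaves implicit.
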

The previous lemma leads to unusual behaviour from a quantum perspective in $\Rel$. For example, taking the partial trace of an entangled state $\Psi$ can result in a pure state:
\[
  \begin{pic}
    \node[morphism, vflip] (l) at (0,0) {$\Psi$}; 
	\node[morphism](r) at (1,0) {$\Psi$}; 
	\draw ([xshift = -1mm]r.north west) to[out=90, in=90] ([xshift=1mm]l.north east);
	\draw ([xshift=1mm]r.north east) to +(0,0.5);
	\draw ([xshift=-1mm]l.north west) to +(0,0.5);
  \end{pic}
  \quad = \quad
  \begin{pic}
	\node[morphism, vflip] (l) at (0,0) {$\psi$}; 
	\node[morphism](r) at (1,0) {$\psi$}; 
	\draw (r.north) to +(0,0.5);
	\draw (l.north) to +(0,0.5);
  \end{pic}
\]
which cannot occur for entangled states in $\cat{FHilb}$. 
\begin{lemma}
  The category $\RelC$ satisfies the bottleneck principle for regular Mal'cev categories $\catC$.
\begin{proof}
 A relation $R \colon A \relto B$ disconnecting in the above sense means $R(a,b) \wedge R(a',b') \Rightarrow R(a,b')$, in which case $R(a,b) \iff (\exists a' \in A) R(a',b) \wedge (\exists b'\in B) R(a,b')$. Suppose $R^\dag \circ R$ splits as in the left-hand side of Definition~\ref{def:rays}, and assume $R(a,b)$ and $R(a',b')$; we will show that $R(a,b')$. It follows from $(R^\dag \circ R)(a,a)$ and $(R^\dag \circ R)(a',a')$ that $(R^\dag \circ R)(a,a')$, that is, $(\exists e \in A) \ R(a,e) \wedge R(a',e)$. But then $R(a,e)$, $R(a',e)$ and $R(a',b')$, and so $R(a,b')$ holds by difunctionality. 
\end{proof}
\end{lemma}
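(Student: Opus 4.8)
The plan is to restate the bottleneck principle of Definition~\ref{def:rays} as a property of relations in regular logic, and then run the argument familiar from $\Rel$, invoking Mal'cevness at exactly one point. First I would make ``$R$ factors through $I$'' explicit in $\RelC$: a relation $R\colon A\relto B$ equals a composite through $I$ precisely when it is a \emph{rectangle}, i.e.\ $R(a,b)\iff\phi(a)\wedge\psi(b)$, where $\phi=\sem{a\in A\mid(\exists b\in B)\,R(a,b)}\rightarrowtail A$ and $\psi=\sem{b\in B\mid(\exists a\in A)\,R(a,b)}\rightarrowtail B$; one checks in regular logic that this holds if and only if $R(a,b)\wedge R(a',b')\Rightarrow R(a,b')$. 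With this reformulation the lemma becomes: if $R^\dag\circ R$ is a rectangle, then so is $R$.

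To prove that implication I would unwind the composite, noting $(R^\dag\circ R)(a,a')\iff(\exists e\in B)\,R(a,e)\wedge R(a',e)$, and in particular $(R^\dag\circ R)(a,a)\iff(\exists e)\,R(a,e)$. Now assume $R(a,b)$ and $R(a',b')$; the goal is $R(a,b')$. From these two facts I obtain $(R^\dag\circ R)(a,a)$ and $(R^\dag\circ R)(a',a')$, and feeding them into the rectangle property of $R^\dag\circ R$ yields $(R^\dag\circ R)(a,a')$, that is, a bridge element $e\in B$ with $R(a,e)$ and $R(a',e)$. Finally, $R(a,e)$, $R(a',e)$ and $R(a',b')$ together give $R(a,b')$ by difunctionality, which is available because $\catC$ is regular Mal'cev (Proposition~\ref{groupodclosedequalsMal'cev}). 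This closes the argument.

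The step that needs care, and the one place where Mal'cevness is genuinely used, is this final appeal to difunctionality: Proposition~\ref{groupodclosedequalsMal'cev} is phrased for endorelations $R\rightarrowtail A\times A$, whereas here $R\colon A\relto B$ is heterogeneous. I would either cite the Carboni--Pedicchio characterization in the form that delivers difunctionality for relations between arbitrary objects, or reduce to the stated case by working with a suitable relation on $A\times B$. Apart from this bookkeeping, every step is a routine manipulation of regular formulae and lifts verbatim from the proof for $\Rel$; no further categorical input is required beyond what Section~\ref{sec:regular} already provides.
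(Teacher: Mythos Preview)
Your proposal is correct and follows essentially the same argument as the paper: reformulate ``factors through $I$'' as the rectangle condition $R(a,b)\wedge R(a',b')\Rightarrow R(a,b')$, use the rectangle property of $R^\dag\circ R$ to produce a common witness $e$, and finish with difunctionality. Your observation that Proposition~\ref{groupodclosedequalsMal'cev}(b) is stated only for endorelations is a genuine point of care that the paper glosses over; either fix you suggest (citing the general Carboni--Pedicchio form, or reducing to an endorelation on $A\times B$) works and is standard.
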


\paragraph{Projections}

To finish, we show that the models $\RelC$ for regular Mal'cev $\catC$, despite the above results, have some non-quantum features distinguishing them from $\cat{FHilb}$. For example, by Proposition~\ref{groupodclosedequalsMal'cev}, any state $\psi \colon I \to (A, \tinymult)$ in $\CP(\RelC)$ is a \emph{projection}~\cite[Definition~3.1]{Coecke2013}:
\[
\begin{pic}
\node[morphism] (l) at (0,0) {$\psi$}; 
\node[morphism](r) at (1,0) {$\psi$};
\node[dot] (d) at (0.5,0.5){}; 
\draw (d.west) to[out=180, in=90] (l.north);
\draw (d.east) to[out=0, in=90] (r.north);
\draw (d.north) to +(0,0.5) node[above]{$A$};
\end{pic}
\quad
=
\quad
\begin{pic}
\node[morphism](r) at (0,0){$\psi$};
\draw (r.north) to +(0,0.9) node[above]{$A$};
\end{pic}
\quad
=
\quad
\begin{pic}
\node[dot](d1) at (0,0){};
\node[dot](d2) at (0,0.5){};
\node[morphism, hflip](r) at (-0.5,1){$\psi$};

\draw (r.south) to[out = -90, in = 180] (d2.west);
\draw (d2.east) to[out = 0, in = -90] +(0.5,0.5) to +(0,0.2) node[above]{$A$};
\draw (d1.north) to (d2.south);

\end{pic}
\]
This is not the case in $\cat{FHilb}$, where, up to scalar factors, the projections of a quantum structure are precisely projections in the usual sense, while states are arbitrary density matrices. \\

\paragraph{Unique measurement outcomes}

Another more striking difference from $\cat{FHilb}$ is that many of the categories $\RelC$ lack distinct classical outcomes of experiments. These outcomes are represented categorically by states copied by the map $\tinymultflip$ of a commutative Frobenius structure \cite{coecke2011interacting}.

We call an object $A$ of a regular category $\catC$ \emph{inhabited} if $\sem{ \exists a \in A} = \id{1}: 1 \rightarrowtail 1$, and $\catC$ \emph{entirely inhabited} when this holds for all objects. Equivalently, any subobject of a terminal object $1$ is isomorphic to $1$.

\begin{proposition} Let $\catC$ be an entirely inhabited regular category. Then any two copyable states $H$, $T$ of a special dagger Frobenius structure $(A, \tinymult)$ in $\RelC$ are equal.

\[ 
    \begin{pic}[scale=.35, yscale=-1] 
      \node[dot] (d) {};
      \draw (d) to +(0,1.6) node[morphism]{$H$};
      \draw (d) to[out=0,in=90] +(1,-1) to +(0,-0.25);
      \draw (d) to[out=180,in=90] +(-1,-1) to +(0,-0.25);
    \end{pic}
    =
    \begin{pic}[scale=.35, yscale = -1]
      \draw (0,0) to +(0,3) node[morphism]{$H$};
      \draw (2.5,0) to +(0,3) node[morphism]{$H$};
    \end{pic}
    \qquad
     \begin{pic}[scale=.35, yscale=-1] 
      \node[dot] (d) {};
      \draw (d) to +(0,1.6) node[morphism]{$T$};
      \draw (d) to[out=0,in=90] +(1,-1) to +(0,-0.25);
      \draw (d) to[out=180,in=90] +(-1,-1) to +(0,-0.25);
    \end{pic}
    =
    \begin{pic}[scale=.35, yscale = -1]
      \draw (0,0) to +(0,3) node[morphism]{$T$};
      \draw (2.5,0) to +(0,3) node[morphism]{$T$};
    \end{pic}
    \qquad
    \implies
    \qquad
        \begin{pic}[scale=.5]
    \node[morphism](s) at (0,0) {$H$} ;
      \draw (s.north) to (0,2);
    \end{pic}
    =
        \begin{pic}[scale=.5]
    \node[morphism](s) at (0,0) {$T$} ;
      \draw (s.north) to (0,2);
    \end{pic}
\]

\begin{proof}
Arguing just as in $\Rel(\cat{Set})$, any copyable state $H$ of an internal groupoid $(A, \tinymult)$ is easily seen to satisfy $\text{dom}(a) = \text{cod}(a')$ for all its members $a, a'$, and $(\text{dom}(a) = \text{dom}(b))\wedge H(b) \implies H(a)$ for any $a$, $b$ in $A$.
Now by assumption $H \wedge T$ is inhabited, meaning $(\exists b \in A) \ H(b) \wedge T(b)$. Hence $T(a)$ implies that $\text{dom}(a) = \text{dom}(b)$ with $b$ in $H$, and so $H(a)$ holds also.
\end{proof}
\end{proposition}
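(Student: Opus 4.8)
The plan is to reduce the statement to one about internal groupoids, read off two elementary closure properties that every copyable state satisfies by arguing just as in $\Rel$, and then invoke the ``entirely inhabited'' hypothesis to force any two such states to coincide. First I would use Theorem~\ref{Thm-OperatorAreGroupoids} to view $(A, \tinymult)$ as an internal groupoid $(A_0, A, s, t, u, m, i)$ in $\catC$, so that a state is nothing but a subobject $H \rightarrowtail A$. Translating the copyability equation $\tinymultflip \circ H = H \otimes H$ into regular logic, it becomes the biconditional
\[
  \big(\text{dom}(f) = \text{cod}(g)\big) \wedge H(f \circ g) \;\iff\; H(f) \wedge H(g).
\]
As every assertion below is a regular formula in the groupoid structure, the transfer principle for regular logic recalled in Section~\ref{sec:regular} lets me reason as if $\catC = \cat{Set}$.

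From this biconditional I would extract two properties of a copyable state $H$:
\begin{enumerate}[(i)]\itemsep0pt
  \item $H(a) \wedge H(a') \implies \text{dom}(a) = \text{cod}(a')$, so in particular every member of $H$ is an endomorphism;
  \item $\big(\text{dom}(a) = \text{dom}(b)\big) \wedge H(b) \implies H(a)$.
\end{enumerate}
Property (i) is the right-to-left implication applied first to $f = g = a$ and then to $f = a$, $g = a'$. For (ii), first take $f = b$ and $g = \id{\text{dom}(b)}$ in the left-to-right implication (using $b = b \circ \id{\text{dom}(b)}$) to obtain $H(\id{x})$ for $x = \text{dom}(b)$; then, since $\id{x} = a^{-1} \circ a$ whenever $\text{dom}(a) = x$, apply the left-to-right implication once more to conclude $H(a)$. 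This step uses the inversion morphism $i$ and the identity morphism $u$ of the internal groupoid, both of which live in $\catC$.

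Finally I would glue the two states. The intersection $H \wedge T$ is a subobject of $A$, hence an object of $\catC$, and so it is inhabited by hypothesis: $(\exists b)\, H(b) \wedge T(b)$. Fix such a $b$. If $T(a)$, then (i) for $T$ gives $\text{dom}(a) = \text{cod}(b) = \text{dom}(b)$, whence (ii) for $H$ gives $H(a)$; so $T \leq H$, and by symmetry $H \leq T$, hence $H = T$.

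I expect the last paragraph to be the only real content, and it is the only place the hypothesis enters. That it is genuinely needed is witnessed by the discrete groupoid on a two-element set in $\Rel$, which carries two distinct copyable states: the point of ``entirely inhabited'' is precisely to stop $H \wedge T$ from being the empty subobject, so that a common member $b$ always exists. Everything else is a routine transcription of the $\cat{Set}$ argument, legitimate by Theorem~\ref{Thm-OperatorAreGroupoids}.
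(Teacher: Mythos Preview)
Your proposal is correct and follows essentially the same approach as the paper: extract the two closure properties (i) and (ii) from the copyability biconditional, then use the entirely-inhabited hypothesis on $H \wedge T$ to find a common member and conclude $T \leq H$. You supply more detail than the paper on how (i) and (ii) are derived (in particular the use of $\id{x} = a^{-1} \circ a$ to obtain (ii)), but the structure and key idea are the same.
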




\begin{example} Any regular category with a zero object is entirely inhabited. This includes any abelian category, along with our main examples of Mal'cev categories $\cat{Gp}$ and $\cat{Vect}_k$. Hence these categories will be unable to model protocols which require distinct copyable states to represent classical bits. 
An `ideal' choice of $\catC$ for a toy model would be Mal'cev regular, while not being entirely inhabited:
the category of quasi-groups provides such an example. 
\end{example}






\bibliographystyle{eptcs}
\bibliography{regular}

\begin{thebibliography}{10}
\providecommand{\bibitemdeclare}[2]{}
\providecommand{\surnamestart}{}
\providecommand{\surnameend}{}
\providecommand{\urlprefix}{Available at }
\providecommand{\url}[1]{\texttt{#1}}
\providecommand{\href}[2]{\texttt{#2}}
\providecommand{\urlalt}[2]{\href{#1}{#2}}
\providecommand{\doi}[1]{doi:\urlalt{http://dx.doi.org/#1}{#1}}
\providecommand{\bibinfo}[2]{#2}

\bibitemdeclare{inproceedings}{abramskycoecke:categoricalsemantics}
\bibitem{abramskycoecke:categoricalsemantics}
\bibinfo{author}{S.~\surnamestart Abramsky\surnameend} \&
  \bibinfo{author}{B.~\surnamestart Coecke\surnameend} (\bibinfo{year}{2004}):
  \emph{\bibinfo{title}{A categorical semantics of quantum protocols}}.
\newblock In: {\sl \bibinfo{booktitle}{Logic in Computer Science 19}},
  \bibinfo{publisher}{IEEE Computer Society}, pp. \bibinfo{pages}{415--425},
  \doi{10.1109/lics.2004.1319636}.

\bibitemdeclare{inbook}{baez:quandaries}
\bibitem{baez:quandaries}
\bibinfo{author}{J.C. \surnamestart Baez\surnameend} (\bibinfo{year}{2006}):
  \emph{\bibinfo{title}{The structural foundations of quantum gravity}},
  chapter \bibinfo{chapter}{Quantum quandaries: a category-theoretic
  perspective}, pp. \bibinfo{pages}{240--266}.
\newblock \bibinfo{publisher}{Oxford University Press},
  \doi{10.1093/acprof:oso/9780199269693.003.0008}.

\bibitemdeclare{article}{Baez2004}
\bibitem{Baez2004}
\bibinfo{author}{J.C. \surnamestart Baez\surnameend} \& \bibinfo{author}{A.S.
  \surnamestart Crans\surnameend} (\bibinfo{year}{2004}):
  \emph{\bibinfo{title}{{Higher-dimensional algebra VI: Lie 2-algebras}}}.
\newblock {\sl \bibinfo{journal}{Theory and Applications of Categories
  [electronic only]}} \bibinfo{volume}{12}, pp. \bibinfo{pages}{492--538}.
\newblock \urlprefix\url{http://eudml.org/doc/124264}.

\bibitemdeclare{article}{Baez2004a}
\bibitem{Baez2004a}
\bibinfo{author}{J.C. \surnamestart Baez\surnameend} \& \bibinfo{author}{A.D.
  \surnamestart Lauda\surnameend} (\bibinfo{year}{2004}):
  \emph{\bibinfo{title}{Higher-dimensional algebra. V: 2-Groups.}}
\newblock {\sl \bibinfo{journal}{Theory and Applications of Categories
  [electronic only]}} \bibinfo{volume}{12}, pp. \bibinfo{pages}{423--491}.
\newblock \urlprefix\url{http://eudml.org/doc/124217}.

\bibitemdeclare{article}{Barnum2007}
\bibitem{Barnum2007}
\bibinfo{author}{H.~\surnamestart Barnum\surnameend},
  \bibinfo{author}{J.~\surnamestart Barrett\surnameend},
  \bibinfo{author}{M.~\surnamestart Leifer\surnameend} \&
  \bibinfo{author}{A.~\surnamestart Wilce\surnameend} (\bibinfo{year}{2007}):
  \emph{\bibinfo{title}{{Generalized No-broadcasting theorem}}}.
\newblock {\sl \bibinfo{journal}{Physical Review Letters}}
  \bibinfo{volume}{99}(\bibinfo{number}{24}), pp. \bibinfo{pages}{1--4},
  \doi{10.1103/PhysRevLett.99.240501}.

\bibitemdeclare{article}{Barnum1996}
\bibitem{Barnum1996}
\bibinfo{author}{H.~\surnamestart Barnum\surnameend}, \bibinfo{author}{C.M.
  \surnamestart Caves\surnameend}, \bibinfo{author}{C.~a. \surnamestart
  Fuchs\surnameend}, \bibinfo{author}{R.~\surnamestart Jozsa\surnameend} \&
  \bibinfo{author}{B.~\surnamestart Schumacher\surnameend}
  (\bibinfo{year}{1996}): \emph{\bibinfo{title}{{Noncommuting Mixed States
  Cannot Be Broadcast}}}.
\newblock {\sl \bibinfo{journal}{Phys. Rev. Lett.}}
  \bibinfo{volume}{76}(\bibinfo{number}{2818}),
  \doi{10.1103/PhysRevLett.76.2818}.

\bibitemdeclare{inproceedings}{bonchietal:signalflow}
\bibitem{bonchietal:signalflow}
\bibinfo{author}{F.~\surnamestart Bonchi\surnameend},
  \bibinfo{author}{P.~\surnamestart Soboci\'{n}ski\surnameend} \&
  \bibinfo{author}{F.~\surnamestart Zanasi\surnameend} (\bibinfo{year}{2014}):
  \emph{\bibinfo{title}{A Categorical Semantics of Signal Flow Graphs}}.
\newblock In: {\sl \bibinfo{booktitle}{CONCUR 2014}}, {\sl
  \bibinfo{series}{Lecture Notes in Computer Science}} \bibinfo{volume}{8704},
  \bibinfo{publisher}{Springer}, pp. \bibinfo{pages}{435--450},
  \doi{\detokenize{10.1007/978-3-662-44584-6_30}}.

\bibitemdeclare{article}{brown:groupoids}
\bibitem{brown:groupoids}
\bibinfo{author}{R.~\surnamestart Brown\surnameend} (\bibinfo{year}{1999}):
  \emph{\bibinfo{title}{Groupoids and crossed objects in algebraic topology}}.
\newblock {\sl \bibinfo{journal}{Homology, Homotopy and Applications}}
  \bibinfo{volume}{1}(\bibinfo{number}{1}), pp. \bibinfo{pages}{1--78},
  \doi{10.4310/hha.1999.v1.n1.a1}.

\bibitemdeclare{article}{Butz1998}
\bibitem{Butz1998}
\bibinfo{author}{C.~\surnamestart Butz\surnameend} (\bibinfo{year}{1998}):
  \emph{\bibinfo{title}{{Regular Categories and Regular Logic}}}.
\newblock {\sl \bibinfo{journal}{BRICS Lecture Series LS-98-2}},
  p.~\bibinfo{pages}{46}.
\newblock \urlprefix\url{http://www.brics.dk/LS/98/2/BRICS-LS-98-2.pdf}.

\bibitemdeclare{article}{A.CarboniM.C.Pedicchio}
\bibitem{A.CarboniM.C.Pedicchio}
\bibinfo{author}{A.~\surnamestart Carboni\surnameend}, \bibinfo{author}{M.C.
  \surnamestart Pedicchio\surnameend} \& \bibinfo{author}{N.~\surnamestart
  Pirovano\surnameend} (\bibinfo{year}{1991}): \emph{\bibinfo{title}{{Internal
  Graphs and Internal Groupoids in Mal'cev Categories}}}.
\newblock {\sl \bibinfo{journal}{Category Theory 1991: Proceedings of an
  International Summer Category Theory}}, \doi{10.1007/BFb0084207}.

\bibitemdeclare{article}{carboniwalters:cartesianbicategories}
\bibitem{carboniwalters:cartesianbicategories}
\bibinfo{author}{A.~\surnamestart Carboni\surnameend} \&
  \bibinfo{author}{R.F.C. \surnamestart Walters\surnameend}
  (\bibinfo{year}{1987}): \emph{\bibinfo{title}{Cartesian bicategories {I}}}.
\newblock {\sl \bibinfo{journal}{Journal of Pure and Applied Algebra}}
  \bibinfo{volume}{49}, pp. \bibinfo{pages}{11--32},
  \doi{10.1016/0022-4049(87)90121-6}.

\bibitemdeclare{article}{coeckeduncan:zx}
\bibitem{coeckeduncan:zx}
\bibinfo{author}{B.~\surnamestart Coecke\surnameend} \&
  \bibinfo{author}{R.~\surnamestart Duncan\surnameend} (\bibinfo{year}{2011}):
  \emph{\bibinfo{title}{Interacting Quantum Observables}}.
\newblock {\sl \bibinfo{journal}{New Journal of Physics}}
  \bibinfo{volume}{13}(\bibinfo{number}{4}), p. \bibinfo{pages}{043016},
  \doi{\detokenize{10.1007/978-3-540-70583-3_25}}.

\bibitemdeclare{article}{coecke2011interacting}
\bibitem{coecke2011interacting}
\bibinfo{author}{B.~\surnamestart Coecke\surnameend} \&
  \bibinfo{author}{R.~\surnamestart Duncan\surnameend} (\bibinfo{year}{2011}):
  \emph{\bibinfo{title}{Interacting quantum observables: categorical algebra
  and diagrammatics}}.
\newblock {\sl \bibinfo{journal}{New Journal of Physics}}
  \bibinfo{volume}{13}(\bibinfo{number}{4}), p. \bibinfo{pages}{043016},
  \doi{10.1088/1367-2630/13/4/043016}.

\bibitemdeclare{inproceedings}{coeckeedwards:spek}
\bibitem{coeckeedwards:spek}
\bibinfo{author}{B.~\surnamestart Coecke\surnameend} \&
  \bibinfo{author}{B.~\surnamestart Edwards\surnameend} (\bibinfo{year}{2011}):
  \emph{\bibinfo{title}{Toy quantum categories}}.
\newblock In: {\sl \bibinfo{booktitle}{Quantum Physics and Logic 2008, ENTCS}},
  \bibinfo{volume}{270}, pp. \bibinfo{pages}{29--40},
  \doi{10.1016/j.entcs.2011.01.004}.

\bibitemdeclare{incollection}{Coecke2013}
\bibitem{Coecke2013}
\bibinfo{author}{B.~\surnamestart Coecke\surnameend},
  \bibinfo{author}{C.~\surnamestart Heunen\surnameend} \&
  \bibinfo{author}{A.~\surnamestart Kissinger\surnameend}
  (\bibinfo{year}{2013}): \emph{\bibinfo{title}{Compositional Quantum Logic}}.
\newblock In \bibinfo{editor}{B.~\surnamestart Coecke\surnameend},
  \bibinfo{editor}{L.~\surnamestart Ong\surnameend} \&
  \bibinfo{editor}{P.~\surnamestart Panangaden\surnameend}, editors: {\sl
  \bibinfo{booktitle}{Computation, Logic, Games, and Quantum Foundations. The
  Many Facets of Samson Abramsky}}, {\sl \bibinfo{series}{Lecture Notes in
  Computer Science}} \bibinfo{volume}{7860}, \bibinfo{publisher}{Springer
  Berlin Heidelberg}, pp. \bibinfo{pages}{21--36},
  \doi{\detokenize{10.1007/978-3-642-38164-5_3}}.

\bibitemdeclare{article}{Coecke2013i}
\bibitem{Coecke2013i}
\bibinfo{author}{B.~\surnamestart Coecke\surnameend},
  \bibinfo{author}{C.~\surnamestart Heunen\surnameend} \&
  \bibinfo{author}{A.~\surnamestart Kissinger\surnameend}
  (\bibinfo{year}{2014}): \emph{\bibinfo{title}{Categories of quantum and
  classical channels}}.
\newblock {\sl \bibinfo{journal}{Quantum Information Processing}}, pp.
  \bibinfo{pages}{1--31}, \doi{10.1007/s11128-014-0837-4}.

\bibitemdeclare{book}{freyd1990categories}
\bibitem{freyd1990categories}
\bibinfo{author}{P.~\surnamestart Freyd\surnameend} \&
  \bibinfo{author}{A.~\surnamestart Scedrov\surnameend} (\bibinfo{year}{1990}):
  \emph{\bibinfo{title}{Categories, allegories}}.
\newblock \bibinfo{volume}{39}, \bibinfo{publisher}{Elsevier},
  \doi{10.5860/choice.29-2765}.

\bibitemdeclare{article}{msc}
\bibitem{msc}
\bibinfo{author}{M.~\surnamestart Gachechiladze\surnameend}
  (\bibinfo{year}{2014}): \emph{\bibinfo{title}{On Categorical
  Characterizations of No-signaling Theories}}.
\newblock {\sl \bibinfo{journal}{MSc Thesis}}.

\bibitemdeclare{article}{Gran1999}
\bibitem{Gran1999}
\bibinfo{author}{M.~\surnamestart Gran\surnameend} (\bibinfo{year}{1999}):
  \emph{\bibinfo{title}{{Internal categories in Mal’cev categories}}}.
\newblock {\sl \bibinfo{journal}{Journal of Pure and Applied Algebra}}
  \bibinfo{volume}{143}(\bibinfo{number}{1-3}), pp. \bibinfo{pages}{221--229},
  \doi{10.1016/S0022-4049(98)00112-1}.

\bibitemdeclare{book}{heinosaariziman:quantum}
\bibitem{heinosaariziman:quantum}
\bibinfo{author}{T.~\surnamestart Heinosaari\surnameend} \&
  \bibinfo{author}{M.~\surnamestart Ziman\surnameend} (\bibinfo{year}{2012}):
  \emph{\bibinfo{title}{The mathematical language of quantum theory}}.
\newblock \bibinfo{publisher}{Cambridge University Press},
  \doi{10.1017/cbo9781139031103}.

\bibitemdeclare{article}{Heunen2013}
\bibitem{Heunen2013}
\bibinfo{author}{C.~\surnamestart Heunen\surnameend},
  \bibinfo{author}{I.~\surnamestart Contreras\surnameend} \&
  \bibinfo{author}{A.~S. \surnamestart Cattaneo\surnameend}
  (\bibinfo{year}{2013}): \emph{\bibinfo{title}{Relative Frobenius algebras are
  groupoids}}.
\newblock {\sl \bibinfo{journal}{Journal of Pure and Applied Algebra}}
  \bibinfo{volume}{217}(\bibinfo{number}{1}), pp. \bibinfo{pages}{114 -- 124},
  \doi{10.1016/j.jpaa.2012.04.002}.

\bibitemdeclare{article}{heunenkissinger:cbh}
\bibitem{heunenkissinger:cbh}
\bibinfo{author}{C.~\surnamestart Heunen\surnameend} \&
  \bibinfo{author}{A.~\surnamestart Kissinger\surnameend}
  (\bibinfo{year}{2015}): \emph{\bibinfo{title}{Can quantum theory be
  characterized by information-theoretic constraints?}}
\newblock {\sl \bibinfo{journal}{in preparation}}.

\bibitemdeclare{article}{heunenkissingervicary:cp}
\bibitem{heunenkissingervicary:cp}
\bibinfo{author}{C.~\surnamestart Heunen\surnameend},
  \bibinfo{author}{A.~\surnamestart Kissinger\surnameend} \&
  \bibinfo{author}{J.~\surnamestart Vicary\surnameend} (\bibinfo{year}{2015}):
  \emph{\bibinfo{title}{Faces of complete positivity}}.
\newblock {\sl \bibinfo{journal}{in preparation}}.

\bibitemdeclare{inbook}{maassen:probability}
\bibitem{maassen:probability}
\bibinfo{author}{H.~\surnamestart Maassen\surnameend} (\bibinfo{year}{2010}):
  \emph{\bibinfo{title}{Quantum Information, Computation and Cryptography}},
  chapter \bibinfo{chapter}{Quantum Probability and Quantum Information}, pp.
  \bibinfo{pages}{65--108}.
\newblock {\sl \bibinfo{series}{Lecture Notes in Physics}}
  \bibinfo{volume}{808}, \bibinfo{publisher}{Springer},
  \doi{\detokenize{10.1007/978-3-642-11914-9_3}}.

\bibitemdeclare{article}{morton:twovectorspaces}
\bibitem{morton:twovectorspaces}
\bibinfo{author}{J.~C. \surnamestart Morton\surnameend} (\bibinfo{year}{2011}):
  \emph{\bibinfo{title}{Two-vector spaces and groupoids}}.
\newblock {\sl \bibinfo{journal}{Applied Categorical Structures}}
  \bibinfo{volume}{19}(\bibinfo{number}{4}), pp. \bibinfo{pages}{659--707},
  \doi{10.1007/s10485-010-9225-0}.

\bibitemdeclare{article}{Noohi2007}
\bibitem{Noohi2007}
\bibinfo{author}{B.~\surnamestart Noohi\surnameend} (\bibinfo{year}{2007}):
  \emph{\bibinfo{title}{{Notes on 2-groupoids, 2-groups and crossed modules}}}.
\newblock {\sl \bibinfo{journal}{Homology, Homotopy and Applications}}
  \bibinfo{volume}{9}(\bibinfo{number}{1}), pp. \bibinfo{pages}{75--106},
  \doi{10.4310/HHA.2007.v9.n1.a3}.

\bibitemdeclare{inproceedings}{selinger:graphicallanguages}
\bibitem{selinger:graphicallanguages}
\bibinfo{author}{P.~\surnamestart Selinger\surnameend} (\bibinfo{year}{2009}):
  \emph{\bibinfo{title}{A survey of graphical languages for monoidal
  categories}}.
\newblock In: {\sl \bibinfo{booktitle}{New Structures for Physics}},
  \bibinfo{series}{Lecture Notes in Physics}, \bibinfo{publisher}{Springer},
  pp. \bibinfo{pages}{289--355},
  \doi{\detokenize{10.1007/978-3-642-12821-9_4}}.

\bibitemdeclare{article}{selinger:complete}
\bibitem{selinger:complete}
\bibinfo{author}{P.~\surnamestart Selinger\surnameend} (\bibinfo{year}{2012}):
  \emph{\bibinfo{title}{Finite dimensional {H}ilbert spaces are complete for
  dagger compact closed categories}}.
\newblock {\sl \bibinfo{journal}{Logical Methods in Computer Science}}
  \bibinfo{volume}{8}(\bibinfo{number}{3}), p.~\bibinfo{pages}{6},
  \doi{10.2168/lmcs-8(3:6)2012}.

\bibitemdeclare{article}{Shannon1943}
\bibitem{Shannon1943}
\bibinfo{author}{C.E. \surnamestart Shannon\surnameend} (\bibinfo{year}{1943}):
  \emph{\bibinfo{title}{{Analogue of the Vernam System for Continuous Time
  Series}}}.
\newblock {\sl \bibinfo{journal}{Memorandum MM 43-110-44, Bell Laboraties}},
  \doi{10.1109/9780470544242.ch3}.

\bibitemdeclare{article}{Stay2013}
\bibitem{Stay2013}
\bibinfo{author}{M.~\surnamestart Stay\surnameend} \&
  \bibinfo{author}{J.~\surnamestart Vicary\surnameend} (\bibinfo{year}{2013}):
  \emph{\bibinfo{title}{{Bicategorical semantics for nondeterministic
  computation}}}.
\newblock {\sl \bibinfo{journal}{Electronic Notes in Theoretical Computer
  Science}} \bibinfo{volume}{298}, pp. \bibinfo{pages}{367--382},
  \doi{10.1016/j.entcs.2013.09.022}.

\bibitemdeclare{article}{vicary:quantumalgebras}
\bibitem{vicary:quantumalgebras}
\bibinfo{author}{J.~\surnamestart Vicary\surnameend} (\bibinfo{year}{2011}):
  \emph{\bibinfo{title}{Categorical formulation of finite-dimensional quantum
  algebras}}.
\newblock {\sl \bibinfo{journal}{Communications in Mathematical Physics}}
  \bibinfo{volume}{304}(\bibinfo{number}{3}), pp. \bibinfo{pages}{765--796},
  \doi{10.1007/s00220-010-1138-0}.

\end{thebibliography}

\newpage
\appendix
\section{Proof of Theorem~\ref{Thm-OperatorAreGroupoids} for regular categories} \label{appendix:OpStructsAreGroupoids}

\begin{proof}
  Let $\catC$ be regular and let $(A, \tinymult, \tinyunit)$ be a special dagger Frobenius structure in $\RelC$, with $\tinymult = (M \colon A \times A \relto A)$, and $\tinyunit = (U \rightarrowtail A)$. We already saw how unitality is interpreted as the two regular formulae \eqref{units2_monproof}, and~\eqref{units1_monproof}, while the other equations of Definition~\eqref{def:frobeniusstructure} translate into:
\begin{align}
 &(\exists e \in A) \ M(a,b,e) \wedge M(e,c,d) \iff  (\exists e \in A) \  M(a,e,d) \wedge M(b,c,e) \label{assoc_monproof}  \tag{A}  \\
&(\exists b \in A)(\exists c \in A) \ M(b,c,a) \wedge M(b,c,a') \iff  a=a' \label{special_monproof} \tag{S} & \\
&(\exists e \in A) \ M(a,e,c) \wedge M(e,d,b) \iff  (\exists e \in A) \ M(c,e,a) \wedge M(e,b,d) \label{Frob'}  \tag{F} 
\end{align}   
  We follow the same proof strategy as in Theorem~\ref{prop:frobeniusredundant}. It follows from~\eqref{special_monproof} that $M$ is single-valued as a relation and hence corresponds to a subobject of the form $(A \times A \leftarrowtail B \sxto{m} A)$ in $\catC$. Again, we write $(a,b)\defined$ for $B(a,b)$, so that  $M(a,b,c)$ means that $(a,b)\defined$ and $m(a,b) = c$, and define relations $S: A \relto U$, $T: A \relto U$ by \eqref{SourceRelation} and \eqref{TargetRelation}, as well as 
  \begin{align*}
    I & = \sem{ (a,b) \in A \times A \mid  (\exists x \in U)(\exists y \in U) \   M(a,b,x) \wedge M(b,a,y) } \colon A \relto A
  \end{align*}
  It suffices to show these relations are total and single-valued, as they then correspond uniquely to morphisms $s$, $t$ and $i$ in $\catC$ defining the data of an internal groupoid 
  \[
  \begin{pic}[font=\small,xscale=2.5]
    \node (0) at (-1,0) {$U$};
    \node (1) at (0,0) {$A$};
    \node (11) at (1,0) {$A \times_U A = B$};
    \draw[->] (11) to node[above=-1mm] {$m$} (1);
    \draw[>->] (0) to (1);
    \draw[->] ([yshift=-2mm]1.west) to node[below=-1mm] {$s$} ([yshift=-2mm]0.east);
    \draw[->] ([yshift=2mm]1.west) to node[above=-1mm] {$t$} ([yshift=2mm]0.east);
    \draw[->] (1.110) to[out=110,in=180] +(.05,.3) to[out=0,in=45] node[right=-1mm]{$i$} (1.70);
  \end{pic}
  \]
  where we must also show that $B$ is in fact a pullback of $s$ and $t$. 

  From the unit laws~\eqref{units2_monproof}, \eqref{units1_monproof} and associativity~\eqref{assoc_monproof}, deduce that elements of $U$ only compose when they are equal, and then that if $(a,x)\defined$ and $(a,y)\defined$ we have $(x,y)\defined$ by associativity, and so $x=y$. Hence $S$ is total and single-valued, as is $T$ similarly. The special case of~\eqref{Frob'} in which $b=s(a)$, $c=t(a)$ and $d=a$ shows that $I$ is total:
  \[
    (\exists e \in A)  \   M(a,e,s(a)) \wedge M(e,a,t(a)),
  \]
  that is, `every morphism has an inverse'. Uniqueness of inverses then follows as for any category, once we have shown that the composition $m$ is associative. 
  Writing $a^{-1}$ for any inverse of $a$, associativity~\eqref{assoc_monproof} gives $m(a^{-1},a) = s(a)$, and it follows that $a$ and $b$ are composable whenever $s(a) = t(b)$. 
  Conversely, when $a$ and $b$ are composable, $m(a,b) = m(m(a,s(a)),b) = m(a, m(s(a),b))$ by~\eqref{assoc_monproof} and so $s(a) = t(b)$. Hence $B$ is indeed the pullback of $s$ and $t$. 

  It remains to verify that these morphisms satisfy the equations defining an internal groupoid. Associativity of $m$ only requires further that $(a,b)\defined$ and $(b,c)\defined$ imply $(m(a,b),c)\defined$. From~\eqref{assoc_monproof} we find that $(m(a,b), s(b))\defined$ whenever $a$ and $b$ compose, and hence $s(m(a,b)) = s(b)$ as desired. Finally, that inverses behave as expected follows from the definition of $I$. 

  Thus any dagger special Frobenius structure in $\RelC$ defines an internal groupoid in $\catC$. Note also that this is the only possible choice of $s$, $t$ and $i$ compatible with $M$ and $U$ since any groupoid operations must satisfy the formulae defining $S$, $T$ and $I$.

  Conversely, given any internal groupoid $(C_0, C_1, m,s,t,u,i)$ in $\catC$, we must show that $M = \tinymult = (m \colon C_1 \times C_1 \relto C_1)$ and $U = \tinyunit = (u \colon C_0 \rightarrowtail C_1)$ satisfy the formulae~\eqref{special_monproof}, \eqref{assoc_monproof}, \eqref{units2_monproof}, \eqref{units1_monproof}, and~\eqref{Frob'}.
  Speciality~\eqref{special_monproof} simply states that the relation $\tinymult$ is single-valued and surjective, which holds since $m(a,s(a)) = a$ for any $a$ in $C_1$. 
  Equation~\eqref{assoc_monproof} follows from associativity of composition $m$.  
  Unitality~\eqref{units2_monproof} and~\eqref{units1_monproof} follows from the equations satisfied by $u$, $s$ and $t$. 
  Finally, the Frobenius law~\eqref{Frob'} simply amounts to the statement that $m(a^{-1},c) = m(b,d^{-1})$ if and only if $m(c^{-1},a) = m(d,b^{-1})$.  
\end{proof}

\section{The category $\CP(\cat{C})$}\label{appendix:cp}

\newenvironment{verticalhack}
  {\begin{array}[b]{@{}c@{}}\displaystyle}
  {\\\noalign{\hrule height0pt}\end{array}}
\begin{proposition}
  The category $\CP(\cat{C})$ is well-defined.
\end{proposition}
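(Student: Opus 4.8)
The plan is to strip the statement down to the only two points that require work. By Definition~\ref{def:cp} a morphism $(A,\tinymult[white_dot])\to(B,\tinymult)$ of $\CP(\catC)$ is literally a morphism $A\to B$ of $\catC$ — the witnessing $g$ is existentially quantified, not part of the data — and composition in $\CP(\catC)$ is defined to be composition in $\catC$. Hence associativity and the two unit laws are inherited verbatim from $\catC$, with nothing to check. What remains is: \textbf{(i)} for each object $(A,\tinymult[white_dot])$, the identity morphism $\id{A}$ of $\catC$ is completely positive as an endomorphism of $(A,\tinymult[white_dot])$; and \textbf{(ii)} completely positive morphisms are closed under composition. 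Throughout one uses that $\catC$ is a compact dagger category, so that the cups, caps and daggers occurring in the Choi matrix make sense.

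For \textbf{(i)} I would exhibit a witness directly. Writing $m$ for the multiplication $\tinymult[white_dot]$, unfolding the Choi matrix of $\id{A}$ and sliding the identity wire aside leaves a diagram built only from $m$ and $m^\dagger$; a single application of the Frobenius law collapses it, up to the swap identifying $A\otimes A$ with itself, to the two-legged spider $m^\dagger\circ m\colon A\otimes A\to A\otimes A$. This is visibly of the form $g^\dagger\circ g$ with $g=m$, so $\id{A}$ is completely positive.

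For \textbf{(ii)}, the real content, let $f\colon(A,\tinymult[white_dot])\to(B,\tinymult)$ and $f'\colon(B,\tinymult)\to(C,\tinymult[black_dot])$ be completely positive with witnesses $g_1\colon A\otimes B\to X$ and $g_2\colon B\otimes C\to Y$, and write $n$ for the multiplication of $(B,\tinymult)$. I would first prove a graphical lemma: the Choi matrix of $f'\circ f$ is obtained from those of $f$ and $f'$ by tensoring them and contracting their $B$-legs through $n$ and $n^\dagger$ — this follows by substituting the definition of composition in $\catC$ and rewriting with the Frobenius law for $(B,\tinymult)$, speciality having made $n^\dagger$ an isometry. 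Substituting the positive forms $\widehat f=g_1^\dagger\circ g_1$ and $\widehat{f'}=g_2^\dagger\circ g_2$ and using speciality once more to split the $B$-contraction symmetrically about its middle then presents $\widehat{f'\circ f}$ as $h^\dagger\circ h$, with $h$ assembled from $g_1$, $g_2$ and a single copy of $n^\dagger$. Thus $f'\circ f$ is completely positive and $\CP(\catC)$ is a well-defined category.

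I expect \textbf{(ii)} to be the main obstacle. The argument is shallow, but one must keep careful track of which $B$-wires are fused, of the ordering of tensor factors, and of applying the Frobenius and speciality laws exactly where needed so that the daggered and undaggered halves of the final diagram remain mirror images. This is the CPM-style computation of~\cite{Coecke2013i,heunenkissingervicary:cp}, run here for an arbitrary special dagger Frobenius structure in place of a matrix algebra, and I would refer the reader there for the routine diagrammatic bookkeeping.
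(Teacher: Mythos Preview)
Your proposal is correct and follows essentially the same route as the paper. The paper's proof reduces immediately to closure under composition, then gives a three-diagram equality: the Choi matrix of $g\circ f$, rewritten via the Frobenius structure on the middle object $B$ (inserting a unit/counit pair and using the Frobenius law), and finally with the square roots $\sqrt{f}$, $\sqrt{g}$ substituted to exhibit it as $h^\dag\circ h$. This is exactly your step~\textbf{(ii)}; the only difference is that the paper draws the pictures while you narrate them. You are slightly more thorough in treating the identity case~\textbf{(i)} explicitly, which the paper absorbs into the phrase ``inherits identities and composition from $\catC$''.
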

\begin{proof}
  It inherits identities and composition from $\cat{C}$. What we have to show is that composition is well-defined. Suppose that the Choi matrices of both morphisms $(A,\tinymult[white_dot],\tinyunit[white_dot]) \sxto{f} (B,\tinymult,\tinyunit) \sxto{g} (C,\tinymult[black_dot],\tinyunit[black_dot])$ are positive, say with square roots $\sqrt{f}$ and $\sqrt{g}$. Then $g \circ f$ also has a positive Choi matrix:
  \[\begin{verticalhack}
    \begin{pic}[yscale=.6]
      \node[morphism] (f) at (0,0) {$g \circ f$};
      \node[white_dot] (l) at (-.5,-1) {};
      \node[black_dot] (r) at (.5,1) {};
      \draw (l.east) to[out=0,in=-90] (f.south);
      \draw (f.north) to[out=90,in=180] (r.west);
      \draw (r.north) to (.5,2) node[above] {$C$};
      \draw (l.south) to (-.5,-2) node[below] {$A$};
      \draw (r.east) to[out=0,in=90,looseness=.4] (1,-2) node[below] {$C$};
      \draw (l.west) to[out=180,in=-90,looseness=.4] (-1,2) node[above] {$A$};
    \end{pic}
    =
    \begin{pic}[yscale=.7]
      \node[white_dot] (l) at (-1.5,-1.25) {};
      \node[morphism] (f) at (-1,-.75) {$f$};
      \node[black_dot] (r) at (1.5,1.25) {};
      \node[morphism] (g) at (1,.75) {$g$};
      \node[dot] (a) at (-.5,-.25) {};
      \node[dot] (b) at (0,-.75) {};
      \node[dot] (c) at (0,.75) {};
      \node[dot] (d) at (.5,.25) {};
      \draw (l.south) to (-1.5,-1.75) node[below] {$A$};
      \draw (r.north) to (1.5,1.75) node[above] {$C$};
      \draw (l.west) to[out=180,in=-90,looseness=.5] (-2,1.75) node[above] {$A$};
      \draw (r.east) to[out=0,in=90,looseness=.5] (2,-1.75) node[below] {$C$};
      \draw (f.north) to[out=90,in=180] (a.west);
      \draw (a.east) to[out=0,in=180] (b.west);
      \draw (b.south) to (0,-1.1) node[dot] {};
      \draw (b.east) to[out=0,in=-90] (d.south);
      \draw (d.east) to[out=0,in=-90] (g.south);
      \draw (a.north) to[out=90,in=180] (c.west);
      \draw (c.north) to (0,1.1) node[dot] {};
      \draw (c.east) to[out=0,in=180] (d.west);
      \draw (l.east) to[out=0,in=-90] (f.south);
      \draw (g.north) to[out=90,in=180] (r.west);
    \end{pic}
    =
    \begin{pic}[yscale=.8]
      \node[morphism,hflip] (tl) at (-.7,0.55) {$\sqrt{f}$};
      \node[morphism] (bl) at (-.7,-0.55) {$\sqrt{f}$};
      \node[morphism,hflip] (tr) at (.7,0.55) {$\sqrt{g}$};
      \node[morphism] (br) at (.7,-0.55) {$\sqrt{g}$};
      \draw (bl.north) to (tl.south);
      \draw (br.north) to (tr.south);
      \draw (tl.north west) to ([yshift=7mm]tl.north west) node[above] {$A$};
      \draw (bl.south west) to ([yshift=-7mm]bl.south west) node[below] {$A$};
      \draw (tr.north east) to ([yshift=7mm]tr.north east) node[above] {$C$};
      \draw (br.south east) to ([yshift=-7mm]br.south east) node[below] {$C$};
      \node[dot] (t) at (0,1.2) {}; 
      \node[dot] (b) at (0,-1.2) {};
      \draw (tl.north east) to[out=90,in=180] (t);
      \draw (tr.north west) to[out=90,in=0] (t);
      \draw (t.north) to (0,1.5) node[dot] {};
      \draw (bl.south east) to[out=-90,in=180] (b);
      \draw (br.south west) to[out=-90,in=0] (b);
      \draw (b.south) to (0,-1.5) node[dot] {};
    \end{pic}
    \end{verticalhack}\qedhere
  \]
\end{proof}

A monoidal dagger category is \emph{positively monoidal} when endomorphisms $f \colon A \to A$ are positive as soon as $f \otimes \id{A}$ is positive. For such categories, one can prove Stinespring's theorem~\cite[Proposition~3.4]{Coecke2013i}, showing that the morphisms $f \colon A \to B$ are precisely those such that $f \otimes \id{E}$ sends completely positive maps $I \to A \otimes E$ to completely positive maps $I \to B \otimes E$ for all special dagger Frobenius structures $E$. It follows that $\CP(\cat{C})$ is equivalent to $\CP^*(\cat{C})$~\cite{Coecke2013i}. In particular, $\CP(\cat{FHilb})$ is the category of finite-dimensional C*-algebras and completely positive maps. However, this is irrelevant to this paper; for more details, see the forthcoming~\cite{heunenkissingervicary:cp}.

\begin{proposition} 
  The category $\RelC$ is positively monoidal for any regular category $\catC$.
\end{proposition}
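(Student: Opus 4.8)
The plan is to unfold the notion \emph{positively monoidal} via the internal regular logic of $\catC$. Since the converse implication ($R$ positive $\implies R\otimes\id{A}$ positive) holds in any monoidal dagger category, it suffices to show: if $R\colon A\relto A$ is a relation in $\RelC$ such that $R\otimes\id{A}$ is positive, then $R$ is positive. Writing out $\otimes$ (induced by $\times$ in $\catC$) and $\id{A}$ (the diagonal relation), positivity of $R\otimes\id{A}$ provides a relation $T\colon A\times A\relto Z$ in $\RelC$, for some object $Z$ of $\catC$, with $T^\dagger\circ T = R\otimes\id{A}$; by the formula for composition in $\RelC$ this says that, in the internal logic of $\catC$,
\[ (\exists z\in Z)\ T((a,c),z)\wedge T((b,c'),z)\quad\iff\quad R(a,b)\wedge c=c' \]
for all $a,b,c,c'\in A$.

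Next I would read off a square root of $R$ directly from $T$: set $S := \sem{(a,z)\in A\times Z\mid (\exists c\in A)\ T((a,c),z)}\colon A\relto Z$. Computing $S^\dagger\circ S$ in regular logic — unfolding $\circ$ and $\dagger$, using that $\exists$ commutes with itself and distributes over $\wedge$ when the bound variable is not free in the other conjunct (Frobenius reciprocity) — gives
\[ (S^\dagger\circ S)(a,b) \;=\; (\exists c,c'\in A)\,(\exists z\in Z)\ T((a,c),z)\wedge T((b,c'),z) \;=\; (\exists c,c'\in A)\big(R(a,b)\wedge c=c'\big) \;=\; (\exists c\in A)\ R(a,b). \]

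The only step that is not bookkeeping, and which I expect to be the main obstacle, is to verify that $(\exists c\in A)\ R(a,b)$ equals $R(a,b)$ as subobjects of $A\times A$. This needs care, since in a general regular category $A\to 1$ need not be a regular epimorphism (e.g.\ the empty sheaf), so quantifying over $A$ is not automatically vacuous. The resolution is that $R$ is a subobject of $A\times A$, hence $R\to 1$ factors through $A\to 1$ and therefore through its image $\mathrm{supp}(A):=\mathrm{im}(A\to 1)\rightarrowtail 1$; by Frobenius reciprocity $(\exists c\in A)\,R$ equals $R$ intersected with the image of the projection $\pi\colon A\times A\times A\to A\times A$, and this image is exactly the pullback of $\mathrm{supp}(A)$ along $A\times A\to 1$, which already contains $R$. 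Hence $(\exists c\in A)\ R(a,b)=R(a,b)$, so $R=S^\dagger\circ S$ is positive. Note this inhabitation subtlety cannot be sidestepped by a cleverer square root: taking instead $S' := \sem{(a,(z,c))\in A\times(Z\times A)\mid T((a,c),z)}$ yields the same expression $(\exists c\in A)\ R(a,b)$ for $S'^\dagger\circ S'$, so the point must be addressed head-on.
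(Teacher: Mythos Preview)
Your proof is correct and follows essentially the same route as the paper: both existentially quantify away the extra $A$-coordinate from the given square root of $R\otimes\id{A}$ to obtain a square root of $R$. The paper does not explicitly address the inhabitation subtlety you raise; your resolution is correct, though it can be put more simply by observing that since $R\rightarrowtail A\times A$, the projection $R\times A\to R$ is split (via $r\mapsto(r,\pi_1 r)$) and hence a regular epimorphism, giving $(\exists c\in A)\,R(a,b)=R(a,b)$ directly.
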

\begin{proof}
  If a relation $R \colon A \relto A$ satisfies $R \otimes \id{A} = S^\dag \circ S$ for some relation $S \colon A \times A \relto X $, then $R$ is equal to $T^\dag \circ T$ where $T = \llbracket (a,x) \in A \times X \mid (\exists c \in A)\ S(a,c,e) \rrbracket$ and hence is positive.
\end{proof}

\end{document}